\documentclass[11pt,reqno]{amsart}

\usepackage{geometry}
\usepackage{lmodern}
\usepackage[T1]{fontenc}

\usepackage{amsmath}
\usepackage{amssymb}
\usepackage{amsthm}
\usepackage{mathtools}
\usepackage{tikz-cd}

\usepackage{comment}
\usepackage{enumitem} 

\usepackage[colorlinks=true, linkcolor=blue, citecolor=magenta, urlcolor=blue]{hyperref}
\usepackage{aliascnt} 
\usepackage[capitalize,nameinlink]{cleveref}

\theoremstyle{plain} 
\newtheorem{theorem}{Theorem}[section]

\newaliascnt{lemma}{theorem}
\newtheorem{lemma}[lemma]{Lemma}
\aliascntresetthe{lemma}
\crefname{lemma}{lemma}{lemmas}
\Crefname{lemma}{Lemma}{Lemmas}

\newaliascnt{corollary}{theorem}
\newtheorem{corollary}[corollary]{Corollary}
\aliascntresetthe{corollary}
\crefname{corollary}{corollary}{corollaries}
\Crefname{corollary}{Corollary}{Corollaries}

\newaliascnt{claim}{theorem}

\aliascntresetthe{claim}
\crefname{claim}{claim}{claims}
\Crefname{claim}{Claim}{Claims}

\theoremstyle{definition} 

\newaliascnt{definition}{theorem}

\aliascntresetthe{definition}
\crefname{definition}{definition}{definitions}
\Crefname{definition}{Definition}{Definitions}

\newaliascnt{example}{theorem}

\aliascntresetthe{example}
\crefname{example}{example}{examples}
\Crefname{example}{Example}{Examples}

\theoremstyle{remark} 

\newaliascnt{remark}{theorem}
\newtheorem{remark}[remark]{Remark}
\aliascntresetthe{remark}
\crefname{remark}{remark}{remarks}
\Crefname{remark}{Remark}{Remarks}
\begin{document}

\title{On the Super-Schottky Ideal and Powers of the Classical Schottky Ideal for $g \geq 5$}

\author{Yuanyuan Shen}

\begin{abstract}
The super Schottky ideal defines the image of the superperiod map, which takes a family of genus $g$ supercurves to its Jacobian. We prove that if $g \geq 5$, then $d=g$ is the minimal number such that $\mathcal{I}_{Sch,g}^d \subset \mathcal{I}_{s-Sch,g}$, where $\mathcal{I}_{Sch,g}$ is the Schottky ideal and $\mathcal{I}_{s-Sch,g}$ is the super Schottky ideal. 
\end{abstract}

\maketitle

\section{Introduction}

\noindent We set the background for studying super Riemann surfaces \cite{Witten2012superRiemann}. A supercurve is a compact, connected $(1|1)$ dimensional complex supermanifold, and a family of super Riemann surfaces is a morphism of supercurves $\pi: X \to S$ together with a rank $(0|1)$ subbundle $\mathcal{D}$ of the relative tangent bundle $T_{X/S}$. Let $\mathfrak{M}_g$ be the moduli space parametrizing families of genus $g$ super Riemann surfaces, and let $\mathfrak M_g^+$ be its
component corresponding to even spin structures. Then the reduced space of $\mathfrak{M}^+_g$ can be identified with $S_g^+$, the moduli space of genus $g$ spin curves $(C,L)$. Over the locus $\mathcal{U}_g \subset \mathfrak{M}^+_g$ of curves with theta characteristics of vanishing cohomology, we define the superperiod map $per: \mathcal{U}_g \to LG_{2g}$ from $\mathcal{U}_g$ to the Lagrangian Grassmannian of the $2g$-dimensional symplectic vector space over $\mathbb{C}$.\\

\noindent Let $\mathcal{I}_{Sch,g}$ define the image of the ordinary period map and $\mathcal{I}_{s-Sch,g}$ define the image of the superperiod map, we prove the following result. \\

\begin{theorem}
    Let d be the minimal number such that $\mathcal{I}_{Sch,g}^d \subset \mathcal{I}_{s-Sch,g}$. Then $d = g$ for all $g\geq 5$. 
\end{theorem}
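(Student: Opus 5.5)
The superperiod map has the structure established by Codogni–Viviani (and Felder–Kazhdan–Polishchuk), and I will build on it. The moduli space $\mathcal{U}_g$ is a split-type superscheme whose structure sheaf is filtered by odd-nilpotent order. The superperiod matrix has the form $\Omega + \Omega^{(2)} + \Omega^{(4)}+\cdots$, where $\Omega^{(2k)}$ is homogeneous of degree $2k$ in the odd coordinates. There are $2g-2$ odd moduli, so the odd nilpotent ideal $\mathcal{N}$ satisfies $\mathcal{N}^{2g-1}=0$. In even degree, the relevant bound is that products of more than $g-1$ even-degree-two elements vanish, since $(2g-2)/2=g-1$.

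\textbf{Upper bound, $\mathcal{I}_{Sch,g}^g\subset\mathcal{I}_{s-Sch,g}$.} Take $F\in\mathcal{I}_{Sch,g}$, a polynomial on $LG_{2g}$ vanishing on the image of the classical Torelli map. Pull back along $per$ and Taylor-expand around the reduced period: $per^*F=F(\Omega)+\sum_{k\ge1}\frac{1}{k!}D^kF(\Omega)[\Omega^{(\ge2)},\dots]$. The first term vanishes, so $per^*F\in\mathcal{N}^2$. Hence for $F_1,\dots,F_g\in\mathcal{I}_{Sch,g}$ the product $per^*(F_1\cdots F_g)$ lies in $\mathcal{N}^{2g}=0$. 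This gives $\mathcal{I}_{Sch,g}^g\subset \ker per^* = \mathcal{I}_{s-Sch,g}$. One must check that the nilpotent filtration really has length $2g-2$, i.e.\ that the odd dimension of $\mathcal{U}_g$ is $2g-2$. This is standard: $h^1(C,T_C\otimes L)=2g-2$ for a theta characteristic $L$ by Riemann–Roch.

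\textbf{Lower bound, $\mathcal{I}_{Sch,g}^{g-1}\not\subset\mathcal{I}_{s-Sch,g}$.} This is the main obstacle. I need $F_1,\dots,F_{g-1}\in\mathcal{I}_{Sch,g}$ whose pulled-back product is nonzero in top odd degree $2g-2$. Only the quadratic correction matters there: $per^*(F_1\cdots F_{g-1})=\prod_i dF_i(\Omega)[\Omega^{(2)}]$ modulo $\mathcal{N}^{2g-1}=0$. I will use the explicit formula for $\Omega^{(2)}$: for odd moduli $\eta,\eta'\in H^1(T_C\otimes L)$ it is the symmetric bilinear pairing given by the Szegő-kernel expression. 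Then $dF(\Omega)[\Omega^{(2)}]$ is the image of $dF|_{\Omega}$, viewed as an element of the conormal space to the Torelli image (for $g\ge4$, quadrics in $I_2(K_C)$ and higher), under a map $N^\vee \to \wedge^2 H^1(T_C\otimes L)^\vee$. Nonvanishing of the $(g-1)$-fold product is then equivalent to this: the span of the image contains $g-1$ two-forms whose wedge product is a nonzero top form in $\wedge^{2g-2}$. For instance, the span could contain a symplectic form of full rank $2g-2$. It suffices to exhibit this at one spin curve $(C,L)$.

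For that step I would first try a hyperelliptic or degenerate (nodal or compact-type) limit, where the Szegő kernel and the quadrics in the canonical ideal are explicit. There I will compute the map from the conormal directions, given by quadrics $Q\in I_2(K_C)$ with $Q\mapsto \sum Q_{ij}\,\omega_i\omega_j$ paired against $S_L(x,y)\eta(x)\eta'(y)$, and show it produces a nondegenerate two-form. Upper semicontinuity of rank then spreads this to a general point. The hypothesis $g\ge5$ should enter here, since for $g\le4$ the Schottky ideal is too small: it is zero for $g\le3$ and a single equation for $g=4$, so there are not enough independent conormal directions to achieve full rank. For $g\ge5$ we have $\dim I_2(K_C)=(g-2)(g-3)/2\ge g-1$, which should give enough room.
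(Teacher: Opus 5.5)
Your upper bound is the same as the paper's. Your reduction of the lower bound is also correct: since $per^*F\in\mathcal N^2$ and there are only $2g-2$ odd coordinates, $per^*(F_1\cdots F_{g-1})$ is exactly the wedge product of the quadratic parts. So it suffices to find one conormal vector $Q\in I_2(C)$, at a non-hyperelliptic point with $H^0(L)=0$, whose image under the codifferential $s_1\colon I_2(C)\to\wedge^2H^0(K^{3/2})$ (multiplication by the Szeg\H{o} kernel) is a nondegenerate skew form; then take $f^{g-1}$.

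That existence statement is the whole content of the theorem, and you do not prove it. ``First try a hyperelliptic or degenerate limit \dots\ and show it produces a nondegenerate two-form'' is a plan with no computation behind it. Moreover, the hyperelliptic limit is exactly the route of Felder--Kazhdan--Polishchuk, which, as the paper recalls, yields only $d\ge g-1$ for even $g$, so it cannot by itself give the statement. You also pass over the spreading problem. At a hyperelliptic curve, $\dim I_2(C_0)$ jumps from $(g-2)(g-3)/2$ to $(g-1)(g-2)/2$, so a full-rank image for some $Q\in I_2(C_0)$ propagates only if $Q$ lies in the limit of the $I_2(C_t)$. At nodal or theta-null limits the Szeg\H{o} kernel has a pole and must be rescaled before any limit exists. (Also, rank is lower semicontinuous, not upper.)

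The missing idea is the paper's choice of degeneration: a general point $(C_0,L_0)$ of the theta-null divisor, with $C_0$ non-hyperelliptic and $H^0(L_0)=\langle\alpha,\beta\rangle$ a base-point-free pencil. There $tS_t\to\alpha\wedge\beta\in\wedge^2H^0(L_0)$, so the rescaled limiting codifferential is the purely algebraic map $Q\mapsto m(Q,\alpha\wedge\beta)$, with $a\cdot b\mapsto a\alpha\wedge b\beta+b\alpha\wedge a\beta$.

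Set $W=\langle\alpha^2,\alpha\beta,\beta^2\rangle\subset V=H^0(K)$ and $F^0=\mathrm{Sym}^3H^0(L_0)\subset H^0(K^{3/2})$, so that $H^0(K^{3/2})/F^0\cong(V/W)\otimes H^0(L_0)$. Then:
\begin{enumerate}
\item $Q_{null}=\alpha^2\cdot\beta^2-(\alpha\beta)^2$ maps to a nondegenerate form on $F^0$ and to zero elsewhere.
\item A $\widetilde Q\in I_2(C_0)$ whose image in $\mathrm{Sym}^2(V/W)$ is nondegenerate gives a nondegenerate block on the span of the $a_j\alpha,a_j\beta$. Such a $\widetilde Q$ exists because the image of $I_2(C_0)$ in $\mathrm{Sym}^2(V/W)$ is a hyperplane, and the irreducible determinantal hypersurface of degree $g-3\ge2$ cannot contain a hyperplane.
\item A Schur-complement argument shows that $Q_{null}+\epsilon\widetilde Q$ has full rank $2g-2$ for small $\epsilon\ne0$.
\item Since $I_2$ is a vector bundle over the non-hyperelliptic locus, this quadric extends to $Q_t\in I_2(C_t)$, and $s_{1,t}(Q_t)$ has full rank.
\end{enumerate}

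This is where $g\ge5$ actually enters: for $g=4$, $I_2(C_0)=\langle Q_{null}\rangle$ and the limit has rank only $4$. It does not enter through your count $\dim I_2\ge g-1$. That count is false for $g=5$ (it gives $3<4$), and it is unnecessary, since a single $Q$ together with the power $f^{g-1}$ suffices.
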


\noindent The problem has previously been researched by Felder, Kazhdan and Polishchuk \cite{FelderKazhdanPolishchuk2019}. For $g=2,3$, the classical Schottky ideal is zero, and $I_{\mathrm{Sch},g}=I_{s-\mathrm{Sch},g}$. For $g \geq 4$, indeed $\mathcal{I}_{Sch,g}^g \subset \mathcal{I}_{s-Sch,g}$ since $\mathfrak{M}^+_g$ has $2g-2$ odd variables. The case for $g = 4$ is established by the companion paper \cite{DonagiNoja-genus4}. For $g \geq 5$, Felder, Kazhdan and Polishchuk proved that the minimal $d$ such that $\mathcal{I}_{Sch,g}^d \subset \mathcal{I}_{s-Sch,g}$ is $d = g$ when $g$ is odd and $d \geq g-1$ when $g$ is even by expressing the codifferential of the superperiod map as a Massey product and evaluating it at the hyperelliptic limits. This paper improves the bound by proving the minimal $d$ such that $\mathcal{I}_{Sch,g}^d \subset \mathcal{I}_{s-Sch,g}$ is $d = g$ when $g$ is even.\\

\noindent This paper will start with the framework established by the companion paper \cite{DonagiNoja-genus4}. Given a curve $C$, consider the line bundle on $C \times C$
\begin{align*}
    \mathcal{O}(a,b,c) := p_1^*K_C^{\otimes a/2} \otimes p_2^*K_C^{\otimes b/2} \otimes \mathcal{O}_{C \times C}(c\Delta),
\end{align*}

\noindent where $p_1, p_2 : C \times C \to C$ are the projections, and $\Delta$ is the diagonal. Restriction to the diagonal gives the short exact sequence:
\begin{equation*}
0 \to \mathcal{O}(a,b,c-1) \to \mathcal{O}(a,b,c) \xrightarrow{\text{Res}} (K_C)^{\otimes(a+b-2c)/2} \to 0
\end{equation*}
\noindent In particular, when $a = b$, under the involution $(x,y) \to (y,x)$ the exact sequence decomposes into symmetric and antisymmetric parts. Let $C$ be such that $H^0(K^{\frac{1}{2}}) = 0$, consider the following diagram of sheaves on $C$ induced by the above exact sequence


\begin{equation*}
\begin{tikzcd}
0 \arrow[r] & H^0(\mathcal{O}(2,2,-1)^+) \arrow[r] \arrow[d, "s_1"] & H^0(\mathcal{O}(2,2,0)^+) \arrow[r] \arrow[d, "s_2"] & H^0(K^2) \arrow[r] \arrow[d, "s_3"] & 0 \\
0 \arrow[r] & H^0(\mathcal{O}(3,3,0)^+) \arrow[r] & H^0(\mathcal{O}(3,3,1)^+) \arrow[r] & H^0(K^2) \arrow[r] & 0,
\end{tikzcd}
\end{equation*}

\noindent Following the convention in \cite{DonagiWitten2013}, $H^0(\mathcal{O}(2,2,-1)^+)$ denotes its symmetric part and $ H^0(\mathcal{O}(3,3,0)^+) $ denotes its antisymmetric part. The vertical maps $s_i$ are multiplication by the unique Szegő-kernel section whose residue along the diagonal is 1. In particular, we identify $s_1$ with the codifferential of the superperiod map. \\

\noindent Consider a family of curves $\{ (C_t,L_t)\}$ with $H^0(K_{C_t}^{\frac{1}{2}}) = 0$ specializing to a spin curve $(C_0,L_0)$ such that $H^0(K_{C_0}^{\frac{1}{2}}) = 2$, and let each $C_t$ correspond to codifferential $s_{1,t}$ for $t \neq 0$. At $t = 0$, the codifferential blows up; however, the product $t s_{1,t}$ has a well defined limit. Consequently, we introduce the rescaled codifferential $s'_{1,t} := t s_{1,t}$ such that its image has the same rank as the image of $s_{1,t}$, and it extends to a well defined limit $s'_{1,0} = \lim_{t \to 0} s'_{1,t}$ at $t = 0$. \\

\noindent At the limiting spin curve $(C_0,L_0)$, the limiting codifferential $s'_{1,0}$ can be computed explicitly. We prove that it is possible to pick quadrics $Q_{null}$, the degenerate cone containing $C_0$, and $\tilde{Q}$, another quadric containing $C_0$, such that the image of $Q_{null} + \epsilon \tilde{Q}$ under $s'_{1,0}$ has full rank for sufficiently small $\epsilon$. Then the image of $s_{1,t}$ has full rank for sufficiently small $t$. Taking $f$ to be the lift of $Q_{null} + \epsilon \tilde{Q}$ in $\mathcal{I}_{Sch}$, we have $per^* f^{g-1} \neq 0$ and $f^{g-1} \not\in  \mathcal{I}_{s-Sch,g}$.

\vspace{1em}

\section{The superperiod map and its second variation}

\subsection{Supercurves and the superperiod map}
\noindent We define the superperiod map. Let $\pi: X \to S$ be a smooth proper supercurve over a superscheme $S$, then its superconformal structure $\mathcal{D}$ is a rank $(0|1)$ maximally non-integrable sub-bundle of the tangent bundle $T_{X/S}$. By maximally non-integrable, we refer to the property that $D^2 = \frac{1}{2}\{D,D\}$ is nowhere proportional to $D$ for any local non-zero section $D$ of $\mathcal{D}$, therefore the projection to $T_{X/S} / \mathcal{D}$ gives an isomorphism $\mathcal{D} \otimes \mathcal{D} \cong T_{X/S} / \mathcal{D}$, inducing the short exact sequence $0 \to \mathcal{D} \to T_{X/S} \to \mathcal{D}^{\otimes 2} \to 0$ and the dual sequence $0 \to (\mathcal{D}^{\vee})^{\otimes 2} \to \Omega_{X/S} \to \mathcal{D}^{\vee} \to 0 $. \\

\noindent By taking the Berezinian of the above sequence, we show $\mathcal{D}^{\vee} \cong \omega_{X/S} \coloneqq Ber(\Omega_{X/S} )$ and by composing with the exterior $d: \mathcal{O}_X \to \Omega_{X/S}$, we
obtain the $\mathcal{O}_S$-linear derivation $\delta: \mathcal{O}_X \to  \omega_{X/S}$. Specifically, note that $\delta$ is $\mathcal{O}_S$-linear since functions on $S$ vanishes in the exterior, but is not $\mathcal{O}_X$-linear. This induces the short exact sequence $0 \to \mathbb{C}_{X/S} \to \mathcal{O}_X \to \omega_{X/S} \to 0$ where $\mathbb{C}_{X/S} = \pi^{-1} \mathcal{O}_S$ are functions locally constant along fibres of $\pi$. Taking the right derived functor gives morphism $\pi_* \omega_{X/S} \to R^1 \pi_* \mathbb{C}_{X/S} \cong R^1 \pi_* \mathbb{Z} \otimes \mathcal{O}_S$.  \\

\noindent Now let $\mathfrak{M}^+_g$ be the even component of the moduli space of genus $g$ supercurves, and $\mathcal{U}_g \subset \mathfrak{M}^+_g$ be the locus corresponding to curves with theta characteristics of vanishing cohomology. Furthermore, consider the universal curve of genus $g$ supercurves, $\mathfrak{X}_g \to \mathfrak{M}^+_g$, and its restriction morphism $\hat{\pi}: X_g :=\mathfrak{X}_g \times_{\mathfrak{M}^+_g} \mathcal{U}_g \to  \mathcal{U}_g$ where $\hat{\pi}_*\omega_{X_g / \mathcal{U}_g}$ is locally free. Applying the above construction, we obtain the morphism 
\begin{align*}
    \hat{\pi}_*\omega_{X_g / \mathcal{U}_g} \to R^1 \pi_* \mathbb{Z} \otimes \mathcal{O}_{\mathcal{U}_g}
\end{align*}

\noindent Choosing locally on $\mathcal{U}_g$ a symplectic basis $\phi: R^1\pi_*\mathbb{Z} \cong \mathbb{Z}^{2g}$ determines a local map, $\hat{per}: \mathcal{U}_g \rightarrow \mathcal{H}_g \subset LG_{2g}$, where $\mathcal{H}_g$ is the Siegel upper half-space and $LG_{2g}$ is the Lagrangian Grassmannian of the $2g$-dimensional symplectic vector space over $\mathbb{C}$. Since a change of symplectic basis acts on $\mathcal{H}_g$ via the modular group $Sp(2g, \mathbb{Z})$, this local map naturally descends to a global superperiod map:$$per: \mathcal{U}_g \rightarrow \mathcal{A}_g \cong \mathcal{H}_g / Sp(2g, \mathbb{Z})$$

\noindent where $\mathcal{A}_g$ is moduli space of principally polarized abelian varieties.
\\

\subsection{The second variation map} Before examining the second variation map associated to the super period map, we first define the second variation map for a general supervariety. Let $X$ be a smooth supervariety, $\mathcal{N} \subset \mathcal{O}_X$ the nilradical, and $\Omega'_X \subset \Omega_X$ the $\mathcal{O}_X$-submodule generated by $dg$ where $g$ is an even function. Further let $Y$ be a reduced supervariety, and $\phi : X \rightarrow Y$ with restriction $\phi_{red} = \phi|_{X_{red}}$, then the following diagram commutes.

\[
\begin{tikzcd}
\mathbf{0} \arrow[r] & \mathbf{T}_{X_{red}} \arrow[r] \arrow[dr, "\mathbf{d}\phi_{red}"'] & (\Omega'_X|_{X_{red}})^\vee \arrow[r] \arrow[d] & (\mathcal{N}^2/\mathcal{N}^3)^\vee \arrow[r] & \mathbf{0} \\
 & & (\phi_{red})^*\mathbf{T}_Y & & 
\end{tikzcd}
\]

\noindent The unique map of $\mathcal{O}_{X_{red}}$-modules induced by the vertical arrow is the second variation map.

\[
d^{(2)}\phi : \wedge^2(\mathcal{N}/\mathcal{N}^2)^\vee \cong (\mathcal{N}^2/\mathcal{N}^3)^\vee \rightarrow coker(d\phi_{red})
\]

\noindent The codifferential is given by taking the dual of the second variation

\[
(d^{(2)}\phi)^\vee : (ker(d\phi^\vee_{red})) \rightarrow \wedge^2(\mathcal{N}/\mathcal{N}^2)
\]

\noindent Consider the superperiod map $per: \mathcal{U}_g \rightarrow \mathcal{A}_g $, we identify $\mathcal{U}_g$ with $X$, $\mathcal{A}_g $  with $Y$ and the odd cotangent space $H^0(K^{3/2})$ with $\mathcal{N}/\mathcal{N}^2$. Then the codifferential of the superperiod map is given by: 

\[
N_{\mathcal{M}_g,\mathcal{A}_g}^* \rightarrow \wedge^2 H^0(K^{3/2}).
\]

\subsection{The Szego kernel}\label{Szego}

Let $(C,L)$ be a spin curve where $L^{\otimes 2}\cong K_C$, and assume $H^0(C,L)=0$. Let $E(x,y)$ denote the prime
form and let $\theta_L$ be the theta function associated to the
theta characteristic $L$. The Szego kernel at $(C,L)$ is defined by
\[
S(x,y)
:=
\frac{
\theta_L\left(\int_y^x\omega\right)
}{
\theta_L(0)E(x,y)
}.
\]
where $\omega$ is a basis of holomorphic 1-forms. Since $H^0(C,L)=0$, the theta characteristic is nonsingular and
$\theta_L(0)\neq 0$. We note that the Szego kernel is holomorphic away from the diagonal, and along the diagonal it has a simple pole with residue $1$. By \cite{DHoker1989}, the codifferential of the superperiod map is multiplication by the corresponding Szego kernel.

\begin{lemma}
\label{prop:Szego-degeneration}
Let $(C_t,L_t)$ be a one-parameter family of even spin curves transverse to the theta-null divisor, with $\dim H^0(C_t,L_t)=0$ for $t \neq 0$ and $\dim H^0(C_0,L_0)=2$. Then the corresponding Szego kernels have a simple pole in
the parameter $t$. Consequently,
\[
S'_t:=tS_t
\]
extends holomorphically to $t=0$, and $S'_0\neq0$, $\operatorname{Res}_{\Delta_0}(S'_0)=0$.
\end{lemma}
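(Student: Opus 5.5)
The plan is to work directly from the theta-function formula for $S_t(x,y)$. In the family $(C_t,L_t)$ the prime form $E_t(x,y)$ and the normalized period matrix $\tau_t$ vary holomorphically in $t$, and so do the numerators $\theta_{L_t}\bigl(\int_y^x\omega_t\bigr)$. The only source of singularity is therefore the denominator $\theta_{L_t}(0)$. The argument has three steps.

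\textbf{Step 1: simple pole.} The theta-null divisor is locally cut out by the even function $\theta_{L}(0)$, i.e.\ $\theta[\epsilon](0,\tau)$ for the fixed even characteristic $\epsilon$ of $L$. By Riemann's singularity theorem, $\theta_L(z)$ vanishes at $z=0$ to order $h^0(C_0,L_0)=2$. Hence $\theta_{L_0}(0)=0$, and since $\theta_L$ is even it vanishes to order $2$ in $z$. Transversality of the family to the theta-null divisor means $\frac{d}{dt}\theta_{L_t}(0)|_{t=0}\neq 0$. So $\theta_{L_t}(0)=t\,u(t)$ with $u(0)\neq0$, and
\[
S'_t(x,y)=\frac{\theta_{L_t}\bigl(\int_y^x\omega_t\bigr)}{u(t)E_t(x,y)}
\]
is holomorphic in $t$ near $0$, away from the diagonal.

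\textbf{Step 2: $S'_0\neq0$.} At $t=0$ we get $S'_0=\theta_{L_0}\bigl(\int_y^x\omega_0\bigr)/(u(0)E_0(x,y))$. The function $x\mapsto\theta_{L_0}\bigl(\int_y^x\omega_0\bigr)$ is not identically zero for generic $y$, by standard Jacobi inversion / Riemann vanishing. Concretely, its zero divisor in $x$ is $y$ plus a divisor in $|L_0(y)|$, because the theta divisor translated by a general point meets the Abel–Jacobi curve properly. A cleaner route is the classical identity: for $h^0(L_0)=2$ with basis $\sigma_1,\sigma_2$,
\[
\theta_{L_0}\Bigl(\int_y^x\omega\Bigr)\big/E(x,y)\;\propto\;\sum_{ij}\partial_i\partial_j\theta_{L_0}(0)\,\omega_i(x)\omega_j(y)\big/(\ldots),
\]
and the limit is proportional to $\sigma_1(x)\sigma_2(y)-\sigma_2(x)\sigma_1(y)$, which is nonzero. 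I would establish this by Taylor expanding in $t$ and using the heat equation $\partial_\tau\theta=\frac{1}{4\pi i}\partial_z^2\theta$. That ties $\frac{d}{dt}\theta_{L_t}(0)$ to the Hessian $\partial_i\partial_j\theta_{L_0}(0)$, which is nonzero under transversality and is the quadric of rank $\le 4$ associated to $L_0$.

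\textbf{Step 3: zero residue.} For $t\neq0$ we have $\operatorname{Res}_\Delta S_t=1$, so $\operatorname{Res}_\Delta S'_t=t$. Residue along $\Delta$ commutes with the holomorphic limit (the family has at most a simple pole along $\Delta$, uniformly in $t$), which gives $\operatorname{Res}_{\Delta_0}S'_0=\lim_{t\to0}t=0$. Equivalently, the numerator $\theta_{L_0}\bigl(\int_y^x\omega\bigr)$ vanishes to order $2$ at $x=y$, while $E$ vanishes only to order $1$, so $S'_0$ is holomorphic on $C_0\times C_0$ and is an antisymmetric section of $L_0\boxtimes L_0$.

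The main obstacle is Step 2, showing $S'_0\not\equiv0$. The cleanest argument is the identification of $S'_0$ with $c\,(\sigma_1\boxtimes\sigma_2-\sigma_2\boxtimes\sigma_1)$, $c\neq0$. To get it, I would use that $S'_0$ is a holomorphic antisymmetric section of $L_0\boxtimes L_0$ (from Step 3), hence lies in $\wedge^2H^0(L_0)$, which is one-dimensional. The remaining task is to check that the coefficient, i.e.\ the leading term of the numerator, is nonzero. This follows from $\theta_{L_0}$ not vanishing identically on the difference surface $C_0-C_0$, since $W_{g-1}-\kappa$ does not contain $C-C$ for $g\ge2$.
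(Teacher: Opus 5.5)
There is a genuine gap. The vanishing orders you assign in Steps 1 and 2 are both wrong, and your argument for $S'_0\neq 0$ rests on a false claim.

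\emph{The numerator vanishes identically at $t=0$.} By Riemann's vanishing theorem, $\theta_{L_0}\bigl(\int_y^x\omega_0\bigr)=0$ iff $h^0(L_0(x-y))>0$. But $h^0(L_0(x-y))\geq h^0(L_0(-y))\geq h^0(L_0)-1=1$ for all $x,y$. This is the familiar fact that $C-C$ lies in the theta divisor translated by any point of $W^1_{g-1}$, and $L_0$ is such a point. Your appeal to $W_{g-1}-\kappa\not\supset C-C$ concerns a different translate. The alternative argument in Step 3 (numerator vanishing to order $2$ at $x=y$) is vacuous for the same reason.

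\emph{Transversality does not give $\frac{d}{dt}\theta_{L_t}(0)\big|_{t=0}\neq 0$.} The heat equation plus Rauch's formula express this derivative as a multiple of the pairing of the Kodaira--Spencer class with the image in $H^0(K_{C_0}^2)$ of the Hessian quadric $\sum_{i,j}\partial_i\partial_j\theta_{L_0}(0)\,\omega_i\omega_j$. By Riemann--Kempf this Hessian is proportional to $\alpha^2\cdot\beta^2-(\alpha\beta)^2=Q_{\mathrm{null}}$, with $\alpha,\beta$ a basis of $H^0(L_0)$. That quadric lies in $I_2(C_0)$, so the derivative vanishes for every deformation of $C_0$; this is the classical tangency of the Jacobian locus to $\theta_{\mathrm{null}}$. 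Correspondingly, $\theta_{L_t}(0)$ is, up to a unit, the determinant and not the Pfaffian of the local skew complex $[\mathcal O_T^2\xrightarrow{a(t)J}\mathcal O_T^2]$ computing $R\pi_*\mathcal L$. So along a transverse family $\theta_{L_t}(0)=t^2u(t)$ with $u(0)\neq0$.

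These two errors do not cancel harmlessly. Your Step 1 together with the true numerator would make $S_t$ holomorphic at $t=0$, with limit of residue $1$, contradicting \cref{lem:injective}. With the correct orders, $S_t$ is $O(t)/O(t^2)$, and $S'_0\neq0$ becomes the claim that $\partial_t\theta_{L_t}\bigl(\int_y^x\omega_t\bigr)\big|_{t=0}$ is not identically zero on $C_0\times C_0$. Your proposal never addresses this.

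The paper gives no argument of its own here; it cites Donagi--Noja. A route that avoids theta functions entirely runs as follows.
\begin{itemize}
\item Take the local model above, with $J$ the standard skew form. Then $\Theta_{\mathrm{null}}=\{a=0\}$, and transversality means $a$ has a simple zero.
\item Multiplication by $a$ is null-homotopic on that complex, via the homotopy $J^{-1}$. By relative K\"unneth, $a$ therefore annihilates $R^1\pi_*$ of the relative version on $\mathcal C\times_T\mathcal C$ of $\mathcal O(1,1,0)^+$.
\item In the sequence $\pi_*\mathcal O(1,1,1)^+\to\mathcal O_T\xrightarrow{\delta}R^1\pi_*\mathcal O(1,1,0)^+$ one gets $\delta(a)=a\,\delta(1)=0$. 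So $a$ lifts to a section $\tilde S$ over the whole family.
\item $\tilde S=a(t)S_t$ for $t\neq0$, since $\pi_*\mathcal O(1,1,0)^+$ is torsion-free and generically zero, so the lift is unique. Hence $tS_t=(t/a)\tilde S$ extends holomorphically across $t=0$.
\item If $\tilde S$ vanished on the central fibre, $\tilde S/t$ would restrict to a section on $C_0\times C_0$ with residue $a'(0)\neq0$. That again contradicts \cref{lem:injective}, so $S'_0\neq0$.
\end{itemize}
Finally, $\operatorname{Res}_{\Delta_0}S'_0=\lim_{t\to0}t=0$, exactly as in the first half of your Step 3, which is correct.
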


\begin{proof}

By the companion paper \cite{DonagiNoja-genus4}.

\end{proof}

\section{Preliminaries}

\noindent We introduce the framework of sheaves on $C \times C$ \cite{DonagiWitten2013}. Let $\mathcal S_g^+$ be the moduli space of spin curves of genus $g$. We begin by considering a line bundle on $C \times C$:

\begin{align*}
    \mathcal{O}(a,b,c) := p_1^*K_C^{\otimes a/2} \otimes p_2^*K_C^{\otimes b/2} \otimes \mathcal{O}_{C \times C}(c\Delta),
\end{align*}

\noindent where $p_1, p_2 : C \times C \to C$ are the projections, and $\Delta$ is the diagonal. Restriction to the diagonal gives the short exact sequence:
\begin{equation*}
0 \to \mathcal{O}(a,b,c-1) \to \mathcal{O}(a,b,c) \xrightarrow{\text{Res}} (K_C)^{\otimes(a+b-2c)/2} \to 0
\end{equation*}
\noindent In particular, when $a = b$, we identify $p_1^*K_C^{\otimes a/2} \otimes p_2^*K_C^{\otimes b/2}$ with $p_2^*K_C^{\otimes b/2} \otimes p_1^*K_C^{\otimes a/2}$ with the choice of signs $dx^{\otimes a/2} dy^{\otimes b/2} \mapsto (-1)^{ab} dy^{\otimes b/2} dx^{\otimes a/2}$. Then the involution $(x,y) \to (y,x)$ on $ \mathcal{O}_{C \times C}(a,a,c)$ decomposes the above exact sequence into symmetric and antisymmetric parts, giving
\begin{equation}\label{exactsequence}
    0 \to \mathcal{O}(a, a, c - 1)^{(-)^{a-c}} \to \mathcal{O}(a, a, c)^{(-)^{a-c}} \xrightarrow{\text{Res}} K_C^{a-c} \to 0
\end{equation}
\begin{remark}
     Let a spin curve $(C,L)$ such that $H^0(L) = 0$, then its Szego kernel $S$ is in $H^0\left(\mathcal O(1,1,1)^+\right)$.
\end{remark}

\noindent We consider the following exact sequences of sheaves on a curve $C$.

\begin{lemma} \label{lemma31}
     Let $C$ be a non-hyperelliptic curve such that $H^0(K^{\frac{1}{2}}) = 0$. Then the following diagram is commutative

     \begin{equation*}
\begin{tikzcd}
0 \arrow[r] & H^0(\mathcal{O}(2,2,-1)^+) \arrow[r] \arrow[d, "s_1"] & H^0(\mathcal{O}(2,2,0)^+) \arrow[r] \arrow[d, "s_2"] & H^0(K^2) \arrow[r] \arrow[d, "s_3"] & 0 \\
0 \arrow[r] & H^0(\mathcal{O}(3,3,0)^+) \arrow[r] & H^0(\mathcal{O}(3,3,1)^+) \arrow[r] & H^0(K^2) \arrow[r] & 0,
\end{tikzcd}
\end{equation*}

\noindent where the horizontal exact sequences are induced by \cref{exactsequence}, and the maps $s_i$ are multiplication by the Szego kernel i.e. the unique section in $H^0(\mathcal{O}(1,1,1)^+)$ having residue $1$ along the diagonal.  Moreover, we can identify the top exact sequence with

\begin{align*}
    0 \to N_{\mathcal{M}_g,\mathcal{A}_g}^* \to T^* \mathcal{A}_g \to T^* \mathcal{M}_g \to 0
\end{align*}

\noindent and identify the map $s_1$ with codifferential of the superperiod map.

\end{lemma}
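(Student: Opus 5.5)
The plan is to check, in order, that the rows are well defined and exact, that multiplication by $S$ lands in the right spaces and commutes with the rows, and that the top row and $s_1$ have the stated geometric meaning.

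\textbf{Exactness of the rows.} Apply \cref{exactsequence} with $(a,c)=(2,0)$ and $(a,c)=(3,1)$. In both cases $a-c$ is even, so the relevant parts are $(-)^{a-c}=+$, and the quotient is $K_C^{a-c}=K^2$. Taking global sections gives left exact sequences. For surjectivity onto $H^0(K^2)$ it suffices that the first cohomology of the left-hand term vanishes.
\begin{itemize}
\item \emph{Top row.} Note that $\mathcal O(2,2,-1)=\mathcal O(2,2,0)(-\Delta)$. The top row is the symmetric part of the classical sequence
\[
0\to H^0(K\boxtimes K(-\Delta))\to H^0(K\boxtimes K)\to H^0(K^2).
\]
Its right arrow is the multiplication map $\operatorname{Sym}^2H^0(K)\to H^0(K^2)$, which is surjective by Max Noether's theorem because $C$ is non-hyperelliptic. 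This makes the top row exact.
\item \emph{Bottom row.} For $\mathcal O(3,3,0)=K^{3/2}\boxtimes K^{3/2}$, the Künneth formula gives $H^1=H^0(K^{3/2})\otimes H^1(K^{3/2})\oplus H^1\otimes H^0$. Since $\deg K^{3/2}>2g-2$, we have $H^1(K^{3/2})=H^0(K^{-1/2})^\vee=0$. Hence $H^1(\mathcal O(3,3,0))=0$, so its antisymmetric part also has vanishing $H^1$, and the bottom row is exact.
\end{itemize}

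\textbf{The vertical maps and commutativity.} By the Remark, $S\in H^0(\mathcal O(1,1,1)^+)$. Multiplication by $S$ sends $\mathcal O(a,a,c)\to\mathcal O(a+1,a+1,c+1)$. With the sign convention $dx^{a/2}dy^{a/2}\mapsto(-1)^{a^2}dy\,dx$, the parity bookkeeping gives the following:
\begin{itemize}
\item $S$ is symmetric, and the symmetric part of $\mathcal O(2,2,c)$ maps to the part of $\mathcal O(3,3,c+1)$ labelled $(-)^{3-(c+1)}=(-)^{2-c}$;
\item so the sign labels match: a section labelled $+$ in the top row goes to one labelled $+$ in the bottom row.
\end{itemize}
I expect this sign verification to be the fiddly step. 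The paper's convention calls $\mathcal O(3,3,0)^+$ the "antisymmetric part", which must be reconciled with the $(-1)^{ab}$ convention. I would do it in local coordinates with $S\sim dx^{1/2}dy^{1/2}/(x-y)$.

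Commutativity of the left square is immediate, since both maps are multiplication by the same section and the inclusions are compatible. For the right square, take a local section $f$ of $\mathcal O(2,2,0)$. Then
\[
\operatorname{Res}_\Delta(Sf)=f|_\Delta\cdot\operatorname{Res}_\Delta S=f|_\Delta,
\]
because $S$ has residue $1$. So $s_3$ is the identity on $H^0(K^2)$, and the right square commutes.

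\textbf{Geometric identification.} In the top row, $H^0(\mathcal O(2,2,0)^+)=\operatorname{Sym}^2H^0(K)$, which is $T^*\mathcal A_g$ at the Jacobian. Likewise $H^0(K^2)=T^*\mathcal M_g$, and the restriction map is the codifferential of the Torelli map. This is the classical Noether/Torelli description. The kernel is then $N^*_{\mathcal M_g,\mathcal A_g}$, the quadrics containing the canonical curve.

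For $s_1$, use $N^*\to\wedge^2H^0(K^{3/2})$ from Section 2.2. Antisymmetric sections of $K^{3/2}\boxtimes K^{3/2}$ are exactly $\wedge^2H^0(K^{3/2})$, and these form $H^0(\mathcal O(3,3,0)^+)$ in the paper's labelling. The identification of $s_1$ with the codifferential of the superperiod map is then precisely the result of \cite{DHoker1989} quoted in Section~\ref{Szego}: the codifferential is multiplication by the Szegő kernel. This step is by citation; the main obstacle is the sign and parity consistency noted above.
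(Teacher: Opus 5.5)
Your proposal is correct and follows the paper's proof essentially step for step: Noether's theorem for the top row, Künneth plus $H^1(K^{3/2})=0$ for the bottom row, compatibility of inclusions for the left square, the local residue computation $\operatorname{Res}_\Delta(Sf)=f|_\Delta$ for the right square, and the citation of D'Hoker--Phong for $s_1$. Your extra parity check, which the paper omits, is best phrased as follows: the sign-twisted involution is multiplicative and fixes $S$, so multiplication by $S$ sends the $+$ eigenspace to the $+$ eigenspace for every $c$. As written, your formula $(-)^{3-(c+1)}$ gives the wrong label at $c=-1$. The labels in the exact sequence belong to the pair $(c-1,c)$, so $\mathcal O(3,3,0)^+$ carries the label $(-)^{3-1}=+$.
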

\begin{proof}

\noindent The top row is exact since the multiplication map is surjective for non-hyperelliptic curves. To show that the bottom row is exact, note that $H^1(\mathcal{O}_{C \times C}(3,3,0)) = H^1(K_C^{\frac{3}{2}}) \otimes H^0(K_C^{\frac{3}{2}}) \oplus H^0(K_C^{\frac{3}{2}}) \otimes H^1(K_C^{\frac{3}{2}}) = 0$ since $H^1(K_C^{\frac{3}{2}}) = 0$ by Serre duality.\\

\noindent To show the left square is commutative, note that the horizontal maps are inclusions, which commute with the vertical maps given by multiplications. To show the right square is commutative, consider a section in $H^0(O(2, 2, 0)^+)$ which is locally $f(x,y) dx \otimes dy$. Multiplying by the Szego kernel, which is locally $\left(\frac{1}{x-y} + h(x,y)\right) dx^{1/2} \otimes dy^{1/2}$ for some holomorphic function $h(x,y)$, followed by taking the residue along the diagonal gives $f(z,z) dz^2$. On the other hand, applying the restriction along the diagonal first gives $f(z,z) dz^2$, and applying the residue of the Szego kernel along the diagonal, which equals 1, gives $f(z,z) dz^2$. \\

\noindent To identify the top exact sequence, note that $ H^0(\mathcal{O}(2,2,0)^+) = T^* \mathcal{A}_g = Sym^2 H^0(K)$, $H^0(K^2)  = T^* \mathcal{M}_g$, and $H^0(\mathcal{O}(2,2,-1)^+) = N_{\mathcal{M}_g,\mathcal{A}_g}^* = I_2(C)$ where $I_2(C)$ is the ideal of quadrics vanishing on $C$. Moreover, both maps $H^0(\mathcal{O}(2,2,0)^+) \to H^0(K^2) $ and $T^* \mathcal{A}_g \to T^* \mathcal{M}_g $ are restrictions to the diagonal. To identify the map $s_1$, note that $H^0(\mathcal{O}(3,3,0)^+) = \wedge^2 H^0(K^{\frac{3}{2}})$, then $s_1$ is identified with the codifferential of the superperiod map by \cite{DHoker1989}.
\end{proof}

\begin{remark}
    Note that by convention in \cite{DonagiWitten2013}, $H^0(\mathcal{O}(2,2,0)^+)$ represents the symmetric part $\operatorname{Sym}^2 H^0(K)$  while $H^0(\mathcal{O}(3,3,0)^+)$ represents the antisymmetric part $\wedge^2 H^0(K^{\frac{3}{2}})$.
\end{remark}

\noindent Now take a family of spin curves $\{ (C_t,L_t)\}$ with $H^0(K_{C_t}^{\frac{1}{2}}) = 0$ transverse to the theta null divisor and specializing to a spin curve $(C_0,L_0)$ such that $H^0(K_{C_0}^{\frac{1}{2}}) = 2$. The Szego kernel is undefined at $(C_0,L_0)$. This motivates the rescaled Szego kernel $S'_t = tS_t$, which extends holomorphically to $t = 0$, with $S'_0 \neq 0$ by properties of the Szego kernel in \cref{Szego}.\\

\noindent Similarly consider the codifferential $s_{1,t}$ corresponding to $\{ (C_t,L_t)\}$ where $t \neq 0$, which blows up at $C_0$. Let $s'_{1,t} := t s_{1,t}$ be multiplication by $S'_t$, where its image has the same rank as the image of $s_{1,t}$ since rank is invariant under multiplication by a constant. Moreover, at $t = 0$ it extends to a well defined limit $s'_{1,0} = \lim_{t \to 0} s'_{1,t}$. Similarly let $s'_{2,0} = \lim_{t \to 0} s'_{2,t}$ and  $s'_{3,0} = \lim_{t \to 0} s'_{3,t}$. The maps induced by multiplication by $S'_t$ depend holomorphically on $t$, therefore the commutative diagrams from \cref{lemma31} for $t\neq0$ specialize to a commutative diagram at $t=0$.

\begin{equation}\label{commdia}
\begin{tikzcd}
0 \arrow[r] & H^0(\mathcal{O}(2,2,-1)^+) \arrow[r] \arrow[d, "s'_{1,0}"] & H^0(\mathcal{O}(2,2,0)^+) \arrow[r] \arrow[d, "s'_{2,0}"] & H^0(K^2) \arrow[r] \arrow[d, "s'_{3,0}"] & 0 \\
0 \arrow[r] & H^0(\mathcal{O}(3,3,0)^+) \arrow[r] & H^0(\mathcal{O}(3,3,1)^+) \arrow[r] & H^0(K^2) \arrow[r] & 0,
\end{tikzcd}
\end{equation}

\noindent We seek to prove that at $C_0$, we can explicitly calculate the codifferential map. We first prove the following lemma

\begin{lemma}\label{lem:injective}
Let $C$ be such that $H^0(K^{\frac{1}{2}}) = 2$. Given a short exact sequence 
\[
0 \to \mathcal{O}(1,1,0)^+ \to \mathcal{O}(1,1,1)^+ \to \mathcal{O} \to 0,
\]
show that the connecting homomorphism $\delta: H^0(\mathcal{O}) \to H^1(\mathcal{O}(1,1,0)^+)$ is injective.
\end{lemma}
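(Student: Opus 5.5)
The plan is to use exactness of the long cohomology sequence. Since $C$ is compact and connected, $H^0(\mathcal{O})=\mathbb{C}$. The long exact sequence attached to
\[
0 \to \mathcal{O}(1,1,0)^+ \to \mathcal{O}(1,1,1)^+ \xrightarrow{\text{Res}} \mathcal{O} \to 0
\]
gives $\ker \delta = \operatorname{im}\bigl(H^0(\mathcal{O}(1,1,1)^+) \xrightarrow{\text{Res}} H^0(\mathcal{O})\bigr)$. So $\delta$ is injective if and only if no global section of $\mathcal{O}(1,1,1)^+$ has nonzero (constant) residue along $\Delta$. In other words, when $H^0(L)\neq 0$ there is no ``Szegő kernel'' with residue $1$. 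This is the degenerate counterpart of the Remark in Section 3: there, the condition $H^0(L)=0$ produced such a section $S$.

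Suppose, for contradiction, that $s \in H^0(\mathcal{O}(1,1,1)^+)$ has $\operatorname{Res}_\Delta s = c \neq 0$; it suffices to treat the full bundle $\mathcal{O}(1,1,1)$, ignoring the symmetry. Fix a point $y \in C$ and restrict $s$ to the fibre $C \times \{y\}$. Then $\mathcal{O}(1,1,1)|_{C\times\{y\}} \cong L(y)\otimes L_y$, so $s(\cdot,y)$ is a section of $L(y)$, up to the one-dimensional factor $L_y$. Its polar part at $x=y$ is the residue of $s$ along $\Delta$ at $(y,y)$, namely $c\neq 0$. I will verify this in local coordinates exactly as in the proof of \cref{lemma31}: near the diagonal, $s = \bigl(\frac{c}{x-y} + h(x,y)\bigr)dx^{1/2}dy^{1/2}$. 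Hence for every $y$, $s(\cdot,y)$ is a section of $L(y)$ that is not in the image of $H^0(L)\hookrightarrow H^0(L(y))$.

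Next I compute $h^0(L(y))$ for generic $y$ to show that this is impossible. By Riemann--Roch, $\deg L(y) = g$, so $h^0(L(y)) - h^1(L(y)) = 1$. By Serre duality and $L^2\cong K$, $h^1(L(y)) = h^0(L(-y))$. Since $h^0(L)=2$, the linear system $|L|$ is a pencil with finitely many base points. For $y$ not a base point, $h^0(L(-y)) = 1$, hence $h^0(L(y)) = 2 = h^0(L)$. Thus every section of $L(y)$ is regular at $y$, which contradicts the previous paragraph at any such $y$. Therefore the residue map on $H^0$ is zero, and $\delta$ is injective (indeed $\delta(1)\neq 0$).

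The main obstacle is the bookkeeping in the first step. I need the restriction of $s$ to the slice $C\times\{y\}$ to identify the residue along $\Delta$ with the pointwise residue of $s(\cdot,y)$ at $y$. I also need this to be compatible with the $\pm$ sign conventions for the symmetric part in \cref{exactsequence}. Neither issue affects the argument, since it only uses that the residue is a nonzero constant and works on the full bundle $\mathcal{O}(1,1,1)$. The dimension count itself is routine once base points of the pencil $|L|$ are excluded.
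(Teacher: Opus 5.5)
Your argument is correct and is essentially the paper's: both reduce, via exactness, to ruling out a global section of $\mathcal{O}(1,1,1)^+$ with nonzero residue along $\Delta$, and both restrict such a section to a slice $C\times\{y\}$ at a point where some $v\in H^0(L)$ does not vanish. The only difference is in the last step: the paper applies the residue theorem to the meromorphic form $G(x,y_0)v(x)$, while you use Riemann--Roch and Serre duality to show $H^0(L)\to H^0(L(y))$ is an isomorphism, which is the dual form of the same computation.
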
 

\begin{proof}

\noindent Consider $1 \in H^0(\mathcal{O})$, its local lift in $H^0(\mathcal{O}(1,1,1)^+)$ on an open set $\mathcal{U}_i$ has the form

\[
\left( \frac{1}{x-y} + R_i(x,y) \right) dx^{\frac{1}{2}} dy^{\frac{1}{2}}.
\]

\noindent where $R_i$ is a holomorphic function. On the intersection of two open sets, the preimage in $H^0(\mathcal{O}(1,1,0)^+)$ of the difference of the lifts is given by 

\[
(R_i - R_j) \, dx^{\frac{1}{2}} dy^{\frac{1}{2}}.
\]

\noindent Since $dim(H^0(\mathcal{O})) = 1$, the connecting homomorphism $\delta: H^0(\mathcal{O}) \to H^1(\mathcal{O}(1,1,0)^+)$ is either injective or zero. Therefore, it suffices to show that the above element is not in the coboundary.\\

\noindent Suppose to the contrary that it is, then the local sections glue into a global meromorphic section $G(x, y)$ on $C \times C$ with residue $1$. Now fix a nonzero section $v\in H^0(C,L)$ and choose
$y_0\in C$ such that $v(y_0)\neq0$. Then
\[
G(x,y_0)v(x)
\]
is an $L_{y_0}$-valued meromorphic one form on $C$ with a
unique possible pole at $x=y_0$, whose residue is $v(y_0)$.
After choosing a trivialization of $L_{y_0}$, the residue theorem
implies $v(y_0)=0$, which is a contradiction.
\end{proof}

\begin{corollary}
    Let $C$ be such that $H^0(K^{\frac{1}{2}}) = 2$. Then $i: H^0(\mathcal{O}(1,1,0)^+) \to H^0(\mathcal{O}(1,1,1)^+)$ is an isomorphism. Moreover, $ H^0(\mathcal{O}(1,1,0)^+)\cong\mathbb{C}$ and $ H^0(\mathcal{O}(1,1,1)^+)\cong\mathbb{C}$.
\end{corollary}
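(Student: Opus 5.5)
The corollary should follow from the long exact cohomology sequence of
\[
0 \to \mathcal{O}(1,1,0)^+ \to \mathcal{O}(1,1,1)^+ \to \mathcal{O} \to 0
\]
together with \cref{lem:injective}. The beginning of that sequence is
\[
0 \to H^0(\mathcal{O}(1,1,0)^+) \xrightarrow{i} H^0(\mathcal{O}(1,1,1)^+) \to H^0(\mathcal{O}) \xrightarrow{\delta} H^1(\mathcal{O}(1,1,0)^+).
\]
Since $\delta$ is injective by \cref{lem:injective}, exactness at $H^0(\mathcal{O})$ forces the restriction map $H^0(\mathcal{O}(1,1,1)^+) \to H^0(\mathcal{O})$ to be zero. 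Exactness at $H^0(\mathcal{O}(1,1,1)^+)$ then shows that $i$ is surjective. Injectivity of $i$ is automatic, since $i$ is induced by an inclusion of sheaves and $H^0$ is left exact. Hence $i$ is an isomorphism.

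It remains to compute $H^0(\mathcal{O}(1,1,0)^+)$. By Künneth,
\[
H^0(\mathcal{O}(1,1,0)) = H^0(p_1^*L \otimes p_2^*L) = H^0(L) \otimes H^0(L),
\]
which is $4$-dimensional because $h^0(L)=2$. The involution swaps the two factors. With the sign convention $dx^{1/2}dy^{1/2} \mapsto (-1)^{1\cdot 1} dy^{1/2}dx^{1/2}$, the ``$+$'' part is identified with $\wedge^2 H^0(L)$, which has dimension $\binom{2}{2}=1$. This matches the remark that $H^0(\mathcal{O}(3,3,0)^+)=\wedge^2 H^0(K^{3/2})$, and it is consistent with the parity exponent $(-)^{a-c}=(-)^{1}$ in \cref{exactsequence}. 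So $H^0(\mathcal{O}(1,1,0)^+)\cong\mathbb{C}$, spanned by $v_1(x)v_2(y)-v_2(x)v_1(y)$ for a basis $v_1,v_2$ of $H^0(L)$. Through $i$, the same holds for $H^0(\mathcal{O}(1,1,1)^+)$.

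The main obstacle is the sign convention: which eigenspace ``$+$'' denotes for $a=1$. If it were the symmetric part, the dimension would be $3$ rather than $1$. I would settle this by checking consistency with \cref{exactsequence}: there the Szegő-type section with residue $1$ must lie in $\mathcal{O}(1,1,1)^{+}$, and the swap acts on the subsheaf $\mathcal{O}(1,1,0)$ with the opposite sign from the polar part $\frac{1}{x-y}$. This consistency pins down the antisymmetric tensors $\wedge^2 H^0(L)$, which gives the claimed dimension $1$.
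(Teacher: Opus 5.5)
Your proof is correct and takes the same route as the paper. The long exact sequence together with the injectivity of $\delta$ from \cref{lem:injective} forces $i$ to be surjective, hence an isomorphism, and $H^0(\mathcal{O}(1,1,0)^+)\cong\wedge^2H^0(L)$ is one-dimensional.

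Your side discussion of the sign convention is slightly off, though you don't need it. For this sequence $a=c=1$, so the label in \cref{exactsequence} is $(-)^{a-c}=+$. The exponent $(-)^1$ belongs to the $c=0$ sequence, whose middle term is the $\operatorname{Sym}^2H^0(L)$ part. Also, $\frac{1}{x-y}$ and $u(x)v(y)-v(x)u(y)$ are both odd functions, so they carry the same eigenvalue, not opposite ones. The sign $(-1)^{1\cdot 1}$ in the swap is what correctly identifies the ``$+$'' part with $\wedge^2H^0(L)$.
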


\begin{proof}
    Consider 
    \[
0 \to \mathcal{O}(1,1,0)^+ \to \mathcal{O}(1,1,1)^+ \to \mathcal{O} \to 0,
\]
which induces the long exact sequence

\[
0 \to H^0(\mathcal{O}(1,1,0)^+) \to H^0(\mathcal{O}(1,1,1)^+) \to H^0(\mathcal{O}) \to H^1(\mathcal{O}(1,1,0)^+) 
\]

\noindent By \cref{lem:injective} the map $\delta: H^0(\mathcal{O}) \to H^1(\mathcal{O}(1,1,0)^+)$ is injective, therefore $ H^0(\mathcal{O}(1,1,0)^+) \to H^0(\mathcal{O}(1,1,1)^+)$ is an isomorphism.\\

\noindent Now since $H^0(\mathcal{O}(1,1,0)^+) \cong \wedge^2 H^0(L)$ by definition and $\dim H^0(L) = 2$, then $H^0(\mathcal{O}(1,1,0)^+)$ is one dimensional.
\end{proof}

\noindent We now consider the following multiplication map:
\begin{equation}\label{mapm}
\begin{array}{rccccl}
m : & \mathrm{Sym}^2 H^0(K) & \otimes & \wedge^2 H^0(K^{1/2}) & \to & \wedge^2 H^0(K^{3/2}) \\
    & a \cdot b & \otimes & \xi \wedge \eta & \mapsto & a\xi \otimes b\eta - a\eta \otimes b\xi + b\xi \otimes a\eta - b\eta \otimes a\xi
\end{array}
\end{equation}
\noindent We proceed to prove that the diagram \cref{commdia} factors through $m$. 
\begin{theorem}\label{thmfactor}
Let $(C_0,L_0)$ be as above and 
\[
\xi:=i^{-1}(S'_0)
\in\wedge^2H^0(L_0).
\]
Then, for every $Q\in I_2(C_0)$,
\[
s'_{1,0}(Q)=m(Q,\xi).
\]
Moreover, $s'_{3,0}=0$.
\end{theorem}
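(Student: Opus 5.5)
The plan is to use the limiting kernel $S'_0$ directly. By \cref{prop:Szego-degeneration} it is a holomorphic section of $\mathcal{O}(1,1,1)^+$ on $C_0\times C_0$ with $\operatorname{Res}_{\Delta_0}(S'_0)=0$. By the Corollary following \cref{lem:injective}, $S'_0$ therefore lies in the image of $i$, and it equals $i(\xi)$ for a unique $\xi\in H^0(\mathcal{O}(1,1,0)^+)\cong\wedge^2H^0(L_0)$. Since $S'_0\neq 0$, we also have $\xi\neq0$. Writing $H^0(L_0)=\langle u,v\rangle$, we may normalize $\xi=u\wedge v$. Under the sign convention of the Preliminaries, this corresponds to the section $u(x)v(y)-v(x)u(y)$ of $p_1^*L_0\otimes p_2^*L_0$, which is antisymmetric as a function and lies in the $+$ part by the conventions of \cref{exactsequence}.

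The first step is $s'_{3,0}=0$. By the commutativity of \cref{commdia} (the right square), for any $\sigma\in H^0(\mathcal{O}(2,2,0)^+)$ we have $s'_{3,0}(\sigma|_\Delta)=\operatorname{Res}_\Delta(S'_0\sigma)$. Because $S'_0$ is holomorphic, $S'_0\sigma$ is a section of $\mathcal{O}(3,3,0)^+\subset\mathcal{O}(3,3,1)^+$, and its residue along $\Delta$ vanishes. Surjectivity of $H^0(\mathcal{O}(2,2,0)^+)\to H^0(K^2)$ then gives $s'_{3,0}=0$. If $C_0$ is hyperelliptic, so that surjectivity is in doubt, one instead argues locally: $s'_{3,0}$ is multiplication by the residue of $S'_0$, which is $0$.

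The second step is to compute $s'_{1,0}(Q)$. By definition it is $\lim_{t\to0} S'_t\cdot Q_t$, which is just the product $S'_0\cdot Q$ as sections on $C_0\times C_0$; here $Q$ is viewed in $H^0(\mathcal{O}(2,2,-1)^+)\subset H^0(\mathcal{O}(2,2,0)^+)=\operatorname{Sym}^2H^0(K)$. Write $Q=\sum_k a_k\cdot b_k$, with $a\cdot b$ corresponding to $a(x)b(y)+b(x)a(y)$. The product with $u(x)v(y)-v(x)u(y)$ expands into
\[
a u(x)\,bv(y)-av(x)\,bu(y)+bu(x)\,av(y)-bv(x)\,au(y),
\]
which is exactly the formula for $m(a\cdot b,u\wedge v)$ in \cref{mapm}, read in $H^0(\mathcal{O}(3,3,0)^+)=\wedge^2H^0(K^{3/2})$. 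The pole of $S'_0$ is absent, and the simple zero of $Q$ along $\Delta$ is simply forgotten once we pass to $\mathcal{O}(3,3,0)$, so the product is already a section of $\mathcal{O}(3,3,0)^+$.

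The main obstacle is making sure this identification is not off by a subtlety, namely that the inclusion $\mathcal{O}(3,3,0)\hookrightarrow\mathcal{O}(3,3,1)$ is compatible with computing the product inside $\mathcal{O}(3,3,-0)$. The product $S'_0Q$ actually vanishes along $\Delta$, so it lies in $\mathcal{O}(3,3,-1)\subset\mathcal{O}(3,3,0)$, and the inclusion $H^0(\mathcal{O}(3,3,0)^+)\to H^0(\mathcal{O}(3,3,1)^+)$ is injective. The product computed in $\mathcal{O}(3,3,1)$, as in the definition of $s'_{2,0}$, therefore determines the element uniquely. Careful bookkeeping of the signs $(-1)^{ab}$ in the symmetric/antisymmetric identification is needed to confirm that the $+$ labels match.
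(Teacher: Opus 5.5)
Your proposal is correct and follows essentially the same route as the paper. You identify $S'_0=i(u\wedge v)$ via the vanishing residue, reduce by linearity to $Q=a\cdot b$, expand $(a\boxtimes b+b\boxtimes a)(u\boxtimes v-v\boxtimes u)$ to recover $m(a\cdot b,u\wedge v)$, and obtain $s'_{3,0}=0$ from $\operatorname{Res}_\Delta(S'_0)=0$; your extra remarks (the right-square/surjectivity argument for $s'_{3,0}$, and the observation that $S'_0Q$ already lies in $\mathcal{O}(3,3,-1)$) are harmless refinements of the paper's one-line justifications.
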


\begin{proof}

\noindent It suffices by linearity to take
\[
Q=a\cdot b\in\operatorname{Sym}^2H^0(K_{C_0})
\qquad\text{and}\qquad
\xi=u\wedge v\in\wedge^2H^0(L_0).
\]
Under the identifications $\operatorname{Sym}^2H^0(K_{C_0})
\cong
H^0\bigl(\mathcal O(2,2,0)^+\bigr)$ and $\wedge^2H^0(L_0)
\cong
H^0\bigl(\mathcal O(1,1,0)^+\bigr)$, these elements correspond respectively to $a\boxtimes b+b\boxtimes a$ and $u\boxtimes v-v\boxtimes u$. Their product is
\[
\begin{aligned}
(a\boxtimes b+b\boxtimes a)
 (u\boxtimes v-v\boxtimes u)
 =
au\boxtimes bv-av\boxtimes bu
+bu\boxtimes av-bv\boxtimes au.
\end{aligned}
\]
Under $H^0\bigl(\mathcal O(3,3,0)^+\bigr)
\cong
\wedge^2H^0(K_{C_0}^{3/2})$, this section corresponds to $au\wedge bv-av\wedge bu$, which is precisely $m(a\cdot b,u\wedge v)$. Hence, for every $Q\in I_2(C_0)$,
\[
s'_{1,0}(Q)
=
Q\cdot S'_0
=
m(Q,\xi).
\]

\noindent Finally, $s'_{3,0}$ is multiplication followed by restriction
to the diagonal, and is therefore multiplication by
$\operatorname{Res}_{\Delta}(S'_0)$. Since $\operatorname{Res}_{\Delta}(S'_0)=0$, we obtain $s'_{3,0}=0$.
\end{proof}

\section{The super-Schottky ideal and powers of the classical Schottky ideal}

\noindent Our goal is to prove the following theorem
\begin{theorem}
    Let d be the minimal number such that $\mathcal{I}_{Sch,g}^d \subset \mathcal{I}_{s-Sch,g}$. Then $d = g$ for all $g\geq 5$. 
\end{theorem}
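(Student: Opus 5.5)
The upper bound $\mathcal{I}_{Sch,g}^g\subset\mathcal{I}_{s-Sch,g}$ holds for all $g\geq 4$. The reason is that $per^*$ of an element of $\mathcal{I}_{Sch,g}$ lies in the nilradical ideal generated by products of two odd coordinates; there are $2g-2$ odd variables, so a $g$-fold product involves $2g$ odd factors and vanishes. By the result of Felder, Kazhdan and Polishchuk the case $g$ odd is settled, but I will give a uniform argument. It suffices to produce $f\in\mathcal{I}_{Sch,g}$ with $per^*f^{g-1}\neq 0$.

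For $f\in\mathcal{I}_{Sch}$, the leading term of $per^*f$ in $\mathcal{N}^2/\mathcal{N}^3$ is $s_1(Q)\in\wedge^2H^0(K^{3/2})$, where $Q\in I_2(C)$ is the conormal class of $f$ (\cref{lemma31}). Hence the leading term of $per^*f^{g-1}$ is $s_1(Q)^{g-1}\in\wedge^{2g-2}H^0(K^{3/2})$, which is the top exterior power since $h^0(K^{3/2})=2g-2$. It is nonzero exactly when the skew form $s_1(Q)$ is nondegenerate, i.e. has full rank $2g-2$. So the task reduces to finding one spin curve $(C,L)$ with $H^0(L)=0$ and one $Q\in I_2(C)$ for which $s_1(Q)$ has full rank; then any lift $f$ of $Q$ gives $f^{g-1}\notin\mathcal{I}_{s-Sch,g}$.

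To find such a pair I will degenerate to a theta-null. Take a non-hyperelliptic $C_0$ with $h^0(L_0)=2$, with $C_0$ general in the theta-null divisor, and a transverse family $(C_t,L_t)$. Full rank is an open condition and $\operatorname{rank} s'_{1,t}=\operatorname{rank}s_{1,t}$ for $t\neq 0$, so it suffices to find $Q\in I_2(C_0)$ with $s'_{1,0}(Q)$ nondegenerate. The quadric must also extend to $I_2(C_t)$, which holds because $I_2$ is a bundle of rank $\binom{g-2}{2}$ over the non-hyperelliptic locus. By \cref{thmfactor}, $s'_{1,0}(Q)=m(Q,u\wedge v)$, where $H^0(L_0)=\langle u,v\rangle$.

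Now choose $Q=Q_{null}+\epsilon\tilde Q$ with $Q_{null}=u^2\cdot v^2-(uv)\cdot(uv)$, the rank-$3$ cone containing $C_0$. As a skew form on $H^0(K^{3/2})$, $m(a\cdot b,u\wedge v)$ is the pairing
\[
(\alpha,\beta)\mapsto \langle \alpha\beta\rangle \text{ built from } a u\wedge bv \text{ terms},
\]
i.e. it has image in the span of $uH^0(K)+vH^0(K)$. I will compute $m(Q_{null},\xi)$ directly: its terms are $u^3\wedge uv^2$-type, and since $u\cdot v^2u=v\cdot u^2 v$ many terms cancel, so it has low rank. I expect its support to be on $V=uH^0(K)+vH^0(K)\subset H^0(K^{3/2})$, of dimension $2g-\dim(\ker)$, where the kernel is the set of relations $ua=vb$, i.e. $H^0(K\otimes L^{-1})\cong H^0(L)$, giving $\dim V=2g-2$. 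So $V=H^0(K^{3/2})$ when the base-point-free pencil trick applies.

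The main obstacle is showing nondegeneracy of $m(Q_{null}+\epsilon\tilde Q,\xi)$ for small $\epsilon$. First I would compute the rank $r$ of $m(Q_{null},\xi)$; I expect it to be small, about $4$. Then I would restrict $m(\tilde Q,\xi)$ to the kernel $W$ of $m(Q_{null},\xi)$. A standard perturbation argument for skew forms says that $A+\epsilon B$ is nondegenerate for small $\epsilon\neq0$ if $B|_W$ is nondegenerate on $W=\ker A$ (the Pfaffian's lowest-order term in $\epsilon$ is $\mathrm{Pf}(A|_{W^\perp\text{-part}})\cdot\mathrm{Pf}(B|_W)$). Since $g\geq5$, $I_2(C_0)$ has dimension at least $3$, so there is room to choose $\tilde Q$. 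I would pick $\tilde Q$ of the form $a\cdot b$ modulo corrections, for instance a quadric of rank $4$ or a general element, and check the restricted Pfaffian. This is the step that needs a genuine computation, possibly on an explicit theta-null curve (for example a trigonal or plane-model example realizing the pencil $|L_0|$). The even case $g$ is where earlier methods failed, so that is where I would concentrate the explicit verification.
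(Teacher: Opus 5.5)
\textbf{There is a genuine gap: the decisive nondegeneracy step is not proved.} Your framework matches the paper's: the upper bound from the $2g-2$ odd variables, the leading term of $per^*f^{g-1}$ as the Pfaffian of $s_1(Q)$, degeneration to a general theta-null $(C_0,L_0)$, the factorization $s'_{1,0}(Q)=m(Q,\xi)$, the choice $Q=Q_{null}+\epsilon\tilde Q$, and the lowest-order-in-$\epsilon$ Pfaffian criterion. But the step you call ``the main obstacle'' is exactly the new content for even $g$, and you leave it as a computation to be done, possibly on an explicit curve. That cannot prove the statement for all $g\geq 5$. Your proposed candidate also fails in general. If $\tilde Q$ has rank $4$, its image in $\operatorname{Sym}^2(V/W)$ has rank at most $4$, but nondegeneracy on the relevant block needs rank $g-3$. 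So a rank-$4$ choice breaks down once $g\geq 8$. You also never determine the kernel of $m(Q_{null},\xi)$. ``About $4$'' is not enough, because the perturbation criterion depends on knowing exactly the subspace on which $m(\tilde Q,\xi)$ must be nondegenerate.

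The paper closes this gap with three concrete facts.
\begin{enumerate}
\item $m(Q_{null},\alpha\wedge\beta)=\alpha^3\wedge\beta^3-3\alpha^2\beta\wedge\alpha\beta^2$ is supported on, and nondegenerate on, $F^0=\operatorname{Im}(\operatorname{Sym}^3H^0(L_0)\to H^0(K^{3/2}))$. This space is exactly $4$-dimensional by injectivity of $\operatorname{Sym}^mH^0(L_0)\to H^0(L_0^m)$. So what you need is nondegeneracy of $m(\tilde Q,\xi)$ modulo $F^0$.
\item By the base-point-free pencil trick, $\mu_1$ is surjective, and a dimension count gives $H^0(K^{3/2})/F^0\cong (V/W)\otimes H^0(L_0)$ with $W=\langle\alpha^2,\alpha\beta,\beta^2\rangle$. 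Since $m(a_ja_k,\alpha\wedge\beta)=a_j\alpha\wedge a_k\beta+a_k\alpha\wedge a_j\beta$, the induced form on this quotient is $\begin{pmatrix}0&C\\-C^{T}&0\end{pmatrix}$, where $C$ is the Gram matrix of $r(\tilde Q)\in\operatorname{Sym}^2(V/W)$. Hence it is nondegenerate if and only if $r(\tilde Q)$ is a nondegenerate quadratic form on $V/W$.
\item Such a $\tilde Q\in I_2(C_0)$ exists. The pencil trick again shows $\pi(K_W)$ has codimension $1$ in $H^0(K^2)$, so $\ker(r|_{I_2(C_0)})=\mathbb{C}\,Q_{null}$. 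Therefore $r(I_2(C_0))$ is a hyperplane in $\operatorname{Sym}^2(V/W)$. A hyperplane cannot lie in the irreducible determinantal hypersurface, which has degree $g-3\geq 2$.
\end{enumerate}
Without these facts, or an equivalent argument, your proposal does not establish $\mathcal{I}_{Sch,g}^{g-1}\not\subset\mathcal{I}_{s-Sch,g}$, in particular for even $g$.
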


\noindent We begin by proving a lemma.

\begin{lemma}\label{lem:injectivesym}
Let $C$ be a smooth connected curve such that $H^0(C, L) = 2$ is generated by $\alpha, \beta$. Then for every $m \geq 0$, the multiplication map
$$ \operatorname{Sym}^m H^0(C, L)\longrightarrow H^0(C, L^m) $$
is injective. Equivalently, 
$$ \alpha^m, \alpha^{m-1}\beta, \dots, \alpha\beta^{m-1}, \beta^m $$
are linearly independent.
\end{lemma}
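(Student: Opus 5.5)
The plan is to reduce the claim to the statement that the rational function $\alpha/\beta$ on $C$ is non-constant. Suppose we have a linear relation
\[
\sum_{i=0}^m c_i\,\alpha^{m-i}\beta^{i}=0 \quad\text{in } H^0(C,L^m),
\]
with not all $c_i$ zero. Since $\alpha,\beta$ are linearly independent, $\beta\neq 0$. On the dense open set $U\subset C$ where $\beta$ does not vanish, the quotient $f:=\alpha/\beta$ is a regular function (a meromorphic function on $C$, since $\alpha,\beta$ are sections of the same line bundle). Dividing the relation by $\beta^m$ on $U$ gives
\[
P(f):=\sum_{i=0}^m c_i f^{m-i}=0 \quad\text{on } U,
\]
where $P(T)=\sum_i c_iT^{m-i}$ is a nonzero polynomial of degree at most $m$.

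First consider the case where $P$ is a nonzero constant, i.e.\ only $c_m\neq 0$. Then the relation reads $c_m\beta^m=0$, which is impossible because $C$ is connected and $\beta\neq0$, so $\beta^m\neq 0$.

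Otherwise $P$ has finitely many roots. Because $C$ is connected (hence irreducible) and $f$ is meromorphic, the image of $U$ under $f$ is either a single point or cofinite in $\mathbb{P}^1$. The equation $P(f)=0$ forces $f(U)$ into the finite root set of $P$, so $f$ must be constant, say $f\equiv\lambda$. Then $\alpha-\lambda\beta$ vanishes on $U$, hence on all of $C$, contradicting the linear independence of $\alpha$ and $\beta$.

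Equivalently, one can phrase this by factoring the binary form: $\sum_i c_i X^{m-i}Y^i=\prod_j(\mu_jX-\nu_jY)$ over $\mathbb{C}$, each factor nonzero. The relation says the product $\prod_j(\mu_j\alpha-\nu_j\beta)$ vanishes. Each factor is a nonzero section of $L$ by independence, and $C$ is integral, so the section ring $\bigoplus_k H^0(L^k)$ has no zero divisors and the product cannot vanish. This version avoids dividing by $\beta$ and is the cleanest write-up I would give.

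There is essentially no obstacle here. The only point requiring care is that connectedness of the smooth curve $C$ gives integrality, so that the product of nonzero sections is nonzero. Note also that the hypothesis ``$H^0(C,L)=2$'' is used only to supply two linearly independent sections $\alpha,\beta$.
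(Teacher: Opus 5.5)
Your proposal is correct. Your first argument is essentially the paper's proof: restrict to $\{\beta\neq 0\}$, divide by $\beta^m$, and use that the nonconstant function $\alpha/\beta$ cannot take all its values in the finite root set of $P$. Your separate treatment of a nonzero constant $P$ only makes explicit what the paper leaves implicit, namely that such a $P$ has no roots at all.

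The binary-form factorization you say you would actually write up is a mildly different, purely algebraic packaging of the same idea. Each root $r$ of $P$ corresponds to a linear factor $\alpha-r\beta$. Vanishing coefficients $c_0=\dots=c_{k-1}=0$ (which lower the degree of $P$) correspond to a factor $\beta^k$. So instead of the value-distribution property of nonconstant meromorphic functions, you only need that $C$ is integral, i.e.\ that a product of nonzero sections of powers of $L$ is nonzero. This makes the argument valid over any algebraically closed field and handles all degenerate cases uniformly, with no extra length.
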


\begin{proof}
Indeed, suppose 
$$ \sum_{i=0}^m c_i \alpha^{m-i}\beta^i = 0. $$
On the nonempty open subset defined by $\beta \neq 0 $, divide by $\beta^m$ to obtain
$$ \sum_{i=0}^m c_i \left(\frac{\alpha}{\beta}\right)^{m-i} = 0. $$
Thus the meromorphic function 
$$ z = \frac{\alpha}{\beta} $$
satisfies a polynomial equation $P(z) = 0$, where 
$$ P(T) = \sum_{i=0}^m c_i T^{m-i}. $$
Because $\alpha$ and $\beta$ are linearly independent, $z$ is nonconstant. But a nonconstant meromorphic function on a smooth curve cannot take values only among the finitely many roots of a nonzero polynomial. Hence $P$ must be the zero polynomial, so 
$$ c_0 = \dots = c_m = 0. $$
\end{proof}

\noindent Now let $C$ be a nonhyperelliptic curve, and let $V=H^{0}(K_{C_{0}})$ with $\dim V=g$, and $L=K_{C_{0}}^{1/2}$ base point free with $H^0(K_{C_0}^{\frac{1}{2}})$ spanned by $\alpha, \beta$. The multiplication map $\operatorname{Sym}^{2}H^{0}(L)\rightarrow V$ defines a 3-dimensional subspace $W\subset V$ spanned by $\alpha^2, \alpha \beta, \beta^2$. Consider
$$0\rightarrow W\rightarrow V\rightarrow V/W\rightarrow 0$$
\noindent which induces 
$$0\rightarrow K_{W}\rightarrow \operatorname{Sym}^{2}V \xrightarrow{r}\operatorname{Sym}^{2}(V/W)\rightarrow 0$$
\noindent where $K_{W}=\operatorname{Im}(W\otimes V\rightarrow \operatorname{Sym}^{2}V)$, then $$\dim K_{W}=\frac{g(g+1)}{2}-\frac{(g-3)(g-2)}{2}=3g-3$$

\noindent Let the canonical ideal $I_{2}(C_{0})$ be the kernel to the multiplication map $\pi:\operatorname{Sym}^{2}V\rightarrow H^{0}(K_{C_{0}}^{\otimes 2})$. We restrict $r$ defined above to this ideal:$$r|_{I_{2}(C_{0})}:I_{2}(C_{0})\rightarrow \operatorname{Sym}^{2}(V/W)$$The kernel of this map is $$\ker(r|_{I_{2}(C_{0})})=I_{2}(C_{0})\cap K_{W}=\ker(\pi|_{K_{W}})$$

\begin{lemma}\label{lem:mu1surjective}
Given $r$, $I_{2}(C_{0})$, $\pi$ and $K_W$ defined above, then    $\dim \ker(\pi|_{K_{W}}) =  \dim \ker(r|_{I_{2}(C_{0})}) = 1$. 
\end{lemma}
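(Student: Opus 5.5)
The plan is to compute the dimension of the image $\pi(K_W)\subset H^0(K_{C_0}^{\otimes 2})$ and compare it with $\dim K_W = 3g-3$. The equality $\ker(\pi|_{K_W}) = \ker(r|_{I_2(C_0)})$ was already observed above, since both equal $I_2(C_0)\cap K_W$. So it suffices to show $\dim \pi(K_W) = 3g-4$.

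\emph{Step 1: describe the image as an iterated product.} Since $K_W$ is the image of $W\otimes V$ in $\operatorname{Sym}^2 V$, we have $\pi(K_W) = W\cdot V\subset H^0(K_{C_0}^{2})$, the span of $\alpha^2 V$, $\alpha\beta V$ and $\beta^2 V$. Writing $U := \alpha V + \beta V \subset H^0(K_{C_0}^{3/2})$, we get $W\cdot V = \alpha U + \beta U$, which is the image of the multiplication map $H^0(L)\otimes U \to H^0(K_{C_0}^2)$.

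\emph{Step 2: apply the base-point-free pencil trick twice.} Because $L$ is base point free with $H^0(L)=\langle \alpha,\beta\rangle$, the Koszul sequence $0\to L^{-1}\to L^{\oplus 2}\to \mathcal{O}\to 0$ shows the following for any line bundle $M$: the kernel of $H^0(L)\otimes H^0(M)\to H^0(L\otimes M)$ is $\{(\beta z,-\alpha z): z\in H^0(M\otimes L^{-1})\}\cong H^0(M\otimes L^{-1})$.
\begin{itemize}
\item First take $M=K_{C_0}$. The kernel is then $H^0(K_{C_0}\otimes L^{-1}) = H^0(L)$, of dimension $2$, so $\dim U = 2g-2$.
\item Next consider $H^0(L)\otimes U\to H^0(K_{C_0}^2)$. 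Its kernel consists of pairs $(x,y)\in U^2$ with $\alpha x+\beta y=0$. Viewing $U\subset H^0(K_{C_0}^{3/2})$, such pairs are exactly $(\beta z,-\alpha z)$ with $z\in H^0(K_{C_0}^{3/2}\otimes L^{-1}) = H^0(K_{C_0}) = V$.
\end{itemize}

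The one point to check, and the only real subtlety, is that for every $z\in V$ both $\beta z$ and $\alpha z$ lie in $U$ and not merely in $H^0(K_{C_0}^{3/2})$. This holds by the definition of $U$. Hence the kernel has dimension exactly $g$, and
\[
\dim \pi(K_W) = 2(2g-2) - g = 3g-4 .
\]

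\emph{Step 3: conclude.} We get $\dim \ker(\pi|_{K_W}) = (3g-3)-(3g-4) = 1$. As a consistency check, this kernel is spanned by the explicit rank-$3$ quadric $\alpha^2\cdot\beta^2 - (\alpha\beta)\cdot(\alpha\beta)\in K_W$. This quadric visibly vanishes on $C_0$ and is nonzero in $\operatorname{Sym}^2 V$ because $\alpha^2,\alpha\beta,\beta^2$ are linearly independent by \cref{lem:injectivesym}; it is the cone $Q_{null}$. The main obstacle is the second application of the pencil trick, where one must make sure the kernel is computed inside $U$ rather than all of $H^0(K_{C_0}^{3/2})$; as noted in Step 2, this is immediate here.
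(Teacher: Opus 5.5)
Your argument is correct and is essentially the paper's proof. Both identify $\pi(K_W)$ with the image of $H^0(L)\otimes H^0(K_{C_0}^{3/2})\to H^0(K_{C_0}^{2})$ and apply the base-point-free pencil trick twice. Your $U$ is all of $H^0(K_{C_0}^{3/2})$ by Riemann--Roch, which is exactly the paper's surjectivity of $\mu_1$, so the subtlety you flag is vacuous. The paper reads off $\dim\operatorname{Im}\mu_2=3g-4$ from $\operatorname{coker}\mu_2\cong H^1(K_{C_0})$, while you read it off from $\ker\cong H^0(K_{C_0})$.

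One slip: the Koszul sequence should be $0\to L^{-1}\to \mathcal{O}^{\oplus 2}\to L\to 0$, twisted by $M$, not $0\to L^{-1}\to L^{\oplus 2}\to\mathcal{O}\to 0$. The kernel description you derive from it is nonetheless the correct one.
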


\begin{proof}
\noindent We first compute the image $\pi(K_{W})\subset H^{0}(K_{C_{0}}^{\otimes 2})$. Consider the following diagram where the maps are given by multiplications:
$$
\begin{array}{ccccc}
H^{0}(L)^{\otimes 2}\otimes V & \longrightarrow & W\otimes V & \longrightarrow & K_{W} \\
\Big\downarrow id \otimes \mu_1& & & & \Big\downarrow \pi|_{K_{W}} \\
H^{0}(L)\otimes H^{0}(K_{C_{0}}\otimes L) & & \xrightarrow{\quad \mu_{2} \quad} & & H^{0}(K_{C_{0}}^{\otimes 2})
\end{array}
$$

\noindent  Since $L$ is a base point free pencil, we have the short exact sequence 
\begin{equation}\label{basepointfree}
    0\rightarrow L^{-1}\rightarrow H^{0}(L)\otimes\mathcal{O}_{C_{0}}\rightarrow L\rightarrow 0
\end{equation} 
\noindent Tensoring with $K_{C_{0}}$ gives:
\begin{align*}
0\rightarrow L\rightarrow H^{0}(L)\otimes K_{C_{0}}\rightarrow K_{C_{0}}\otimes L\rightarrow 0
\end{align*}
\noindent Taking global sections yields the long exact sequence$$0\rightarrow H^{0}(L)\rightarrow H^{0}(L)\otimes H^{0}(K_{C_{0}})\xrightarrow{\mu_1} H^{0}(K_{C_{0}}\otimes L)\rightarrow H^{1}(L)\rightarrow H^{0}(L)\otimes H^{1}(K_{C_{0}})$$ 

\noindent Note that $\dim ker(\mu_1) = \dim H^{0}(L) = 2$ by the exactness of the sequence, then $rank(\mu_1) = 2g - 2$. On the other hand, $\dim H^{0}( K_{C_{0}} \otimes L)  = 2g-2$ by Riemann Roch. Therefore $\mu_{1}$ is surjective.\\

\noindent Now since all maps are multiplications, the diagram is commutative. Moreover, given that $\mu_{1}$ is surjective, and that the top row is surjective by definition, we obtain  $\pi(K_{W})=\operatorname{Im}(\mu_{2})$. \\

\noindent Consider tensoring the exact sequence \eqref{basepointfree} by $K_{C_{0}}\otimes L$:$$0\rightarrow K_{C_{0}}\rightarrow H^{0}(L)\otimes(K_{C_{0}}\otimes L)\rightarrow K_{C_{0}}^{\otimes 2}\rightarrow 0$$Taking global sections gives:$$0\rightarrow H^{0}(K_{C_{0}})\rightarrow H^{0}(L)\otimes H^{0}(K_{C_{0}}\otimes L)\xrightarrow{\mu_{2}} H^{0}(K_{C_{0}}^{\otimes 2})\xrightarrow{\mu_{3}} H^{1}(K_{C_{0}})\rightarrow H^{0}(L)\otimes H^{1}(K_{C_{0}}\otimes L)$$By Serre duality, $H^{1}(K_{C_{0}}\otimes L)\cong H^{0}(L^{-1})^{\vee}$. Since $\deg(L^{-1})=1-g<0$, then $H^{0}(L^{-1})=0$ and $H^{1}(K_{C_{0}}\otimes L)=0$. Therefore $\operatorname{coker}(\mu_{2}) = H^{0}(K_{C_{0}}^{\otimes 2}) / \operatorname{im}(\mu_{2})  = H^{0}(K_{C_{0}}^{\otimes 2}) / \operatorname{ker}(\mu_{3})  = H^{1}(K_{C_{0}}) \cong\mathbb{C}$, and $\dim \operatorname{Im}(\mu_{2})=3g-4$. Finally $\dim \pi(K_W) = 3g-4$, and

\begin{align*}
    \dim \ker(\pi|_{K_{W}})&=\dim K_{W}-\dim \pi(K_{W})=(3g-3)-(3g-4)=1\\
  \dim \ker(r|_{I_{2}(C_{0})}) &= \dim \ker(\pi|_{K_{W}}) = 1
\end{align*} 

\end{proof}

\noindent Consider the quadric $Q_{null}$ in $\operatorname{Sym}^2 V$ defined by $ \alpha^2 \beta^2 - (\alpha \beta)^2$, which is nonzero since $\alpha^2$, $\beta^2$, $\alpha \beta$ are linearly independent. Moreover it is a degenerate cone containing $C_0$. 

\begin{corollary}
The vector space $\ker(r|_{I_{2}(C_{0})}) $ is spanned by $Q_{null}$.
\end{corollary}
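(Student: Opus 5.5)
Since \cref{lem:mu1surjective} shows that $\ker(r|_{I_2(C_0)})$ is one-dimensional, it suffices to check that $Q_{null}$ is a nonzero element of this kernel. I will verify three properties in turn: $Q_{null}\in K_W$, $Q_{null}\in I_2(C_0)$, and $Q_{null}\neq 0$. The dimension count then forces $\ker(r|_{I_2(C_0)})=\mathbb{C}\cdot Q_{null}$.

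First, $Q_{null}=\alpha^2\cdot\beta^2-(\alpha\beta)\cdot(\alpha\beta)$ is a sum of products $w\cdot w'$ with $w,w'\in W\subset V$. Hence it lies in the image of $W\otimes V\to\operatorname{Sym}^2V$, which is $K_W$. Equivalently, the map $r$ sends both $\alpha^2\cdot\beta^2$ and $(\alpha\beta)\cdot(\alpha\beta)$ to zero in $\operatorname{Sym}^2(V/W)$, since each factor maps to $0$ in $V/W$. So $r(Q_{null})=0$.

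Second, for membership in $I_2(C_0)$, I apply the multiplication map $\pi:\operatorname{Sym}^2V\to H^0(K_{C_0}^{\otimes 2})$. This map is the product of sections, and $\pi$ commutes with multiplication of the underlying sections of $L$. Therefore
\[
\pi(Q_{null})=\alpha^2\beta^2-(\alpha\beta)^2=0
\]
in $H^0(L^{4})=H^0(K_{C_0}^{\otimes 2})$, so $Q_{null}\in I_2(C_0)$. Combined with the first step, $Q_{null}\in I_2(C_0)\cap K_W=\ker(r|_{I_2(C_0)})$.

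Third, I must show $Q_{null}\neq 0$ as an element of $\operatorname{Sym}^2V$; this is the only step needing any care. The sections $\alpha^2,\alpha\beta,\beta^2$ are linearly independent in $V$ by \cref{lem:injectivesym} with $m=2$. Extend them to a basis $w_1=\alpha^2$, $w_2=\alpha\beta$, $w_3=\beta^2$, and further vectors, of $V$. The monomials $w_iw_j$ with $i\le j$ then form a basis of $\operatorname{Sym}^2V$, and $Q_{null}=w_1w_3-w_2^2$ is a nonzero combination of distinct basis monomials. Since $Q_{null}$ is a nonzero element of a one-dimensional space, it spans $\ker(r|_{I_2(C_0)})$, which proves the corollary.
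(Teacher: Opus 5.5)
Your proposal is correct and follows the paper's argument. Like the paper, you show $\pi(Q_{null})=0$ to get $Q_{null}\in I_2(C_0)$, observe that $r(Q_{null})=0$ because both factors of each monomial lie in $W$, and then use the one-dimensionality from \cref{lem:mu1surjective}. Your explicit nonvanishing check via \cref{lem:injectivesym} with $m=2$ simply spells out the remark the paper makes when it defines $Q_{null}$.
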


\begin{proof}
    Note $\pi(Q_{null}) = \alpha^2 \beta^2 - \alpha^2 \beta^2 = 0$, therefore it is in $I_2(C_0)$. Furthermore, by definition $Q_{null}$ is in the kernel of $r$. Since $\dim \ker(r|_{I_{2}(C_{0})}) = 1$, then $\ker(r|_{I_{2}(C_{0})}) $ is spanned by $Q_{null}$.
\end{proof}

\noindent We proceed to prove that there is a non-degenerate quadratic form in $\operatorname{Im}(r|_{I_{2}(C_{0})})$.

\begin{theorem}\label{lem:nondeg}
     The image $\operatorname{Im}(r|_{I_{2}(C_{0})})$ must intersect the open dense subset of non-degenerate quadratic forms.
\end{theorem}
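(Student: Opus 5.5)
The plan is a dimension count followed by an elementary linear algebra observation about hyperplanes in a space of quadratic forms. Write $U=V/W$, so $n:=\dim U=g-3$, and the hypothesis $g\geq 5$ gives $n\geq 2$.

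\textbf{Step 1: the image is a hyperplane.} Since $C_0$ is nonhyperelliptic, Max Noether's theorem makes $\pi:\operatorname{Sym}^2V\to H^0(K_{C_0}^{\otimes 2})$ surjective. Hence
\[
\dim I_2(C_0)=\tfrac{g(g+1)}{2}-(3g-3)=\tfrac{(g-2)(g-3)}{2}.
\]
By \cref{lem:mu1surjective} the kernel of $r|_{I_2(C_0)}$ is one-dimensional, so
\[
\dim\operatorname{Im}(r|_{I_2(C_0)})=\tfrac{(g-2)(g-3)}{2}-1=\dim\operatorname{Sym}^2(U)-1.
\]
Thus $\operatorname{Im}(r|_{I_2(C_0)})$ is a hyperplane $H\subset\operatorname{Sym}^2U$.

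\textbf{Step 2: write the hyperplane via a trace pairing.} Identify $\operatorname{Sym}^2U$ with symmetric $n\times n$ matrices after choosing a basis. Every hyperplane is then of the form
\[
H=\{A : \operatorname{tr}(BA)=0\}
\]
for some nonzero symmetric matrix $B$. A congruence change of basis $A\mapsto P^{T}AP$ preserves nondegeneracy and transforms $B$ by $P^{-1}B(P^{-1})^{T}$. We may therefore assume $B=\operatorname{diag}(b_1,\dots,b_k,0,\dots,0)$ with $k\geq1$ and all $b_i\neq0$.

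\textbf{Step 3: exhibit a nondegenerate $A$ with $\operatorname{tr}(BA)=0$.} This is the only step needing care, and it is where $n\geq2$ enters.
\begin{itemize}
\item If $k\geq 2$, take $A$ diagonal with $a_{11}=b_2$, $a_{22}=-b_1$ and $a_{ii}=1$ for $i\geq3$. Then $\operatorname{tr}(BA)=b_1b_2-b_2b_1+0=0$ and $\det A=-b_1b_2\neq0$.
\item If $k=1$, no diagonal choice works, since it would force $a_{11}=0$. Instead take $A$ with $a_{11}=0$, $a_{12}=a_{21}=1$, $a_{ii}=1$ for $i\geq3$, and all other entries zero. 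Then $\operatorname{tr}(BA)=b_1a_{11}=0$, and $A$ is block diagonal with a hyperbolic $2\times2$ block, so $\det A=-1\neq0$.
\end{itemize}
This step fails precisely when $n=1$, i.e.\ $g=4$, consistent with the genus-$4$ case being treated separately. Equivalently, one could argue that the degenerate locus $\{\det=0\}$ is an irreducible hypersurface of degree $n\geq2$ and so cannot contain the hyperplane $H$; the explicit construction above avoids having to prove irreducibility.

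\textbf{Step 4: conclude.} The element $A$ lies in $H=\operatorname{Im}(r|_{I_2(C_0)})$ and is nondegenerate. The nondegenerate forms are the complement of $\{\det=0\}$, a proper Zariski-closed subset, so they form an open dense set. Since $H$ meets this set, it meets it in a nonempty open dense subset of $H$, which is the claim.
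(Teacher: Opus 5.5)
Your Steps 1, 2 and 4 are fine, and the strategy of exhibiting one nondegenerate form in $H$ by hand is sound. However, the case $k\geq 2$ of Step 3 is wrong as soon as $k\geq 3$. With $B=\operatorname{diag}(b_1,\dots,b_k,0,\dots,0)$ and $A=\operatorname{diag}(b_2,-b_1,1,\dots,1)$ you get
\[
\operatorname{tr}(BA)=b_1b_2-b_2b_1+b_3+\cdots+b_k ,
\]
not $b_1b_2-b_2b_1+0$. The diagonal entries $a_{ii}=1$ for $3\le i\le k$ pair with nonzero $b_i$. For example, $n=k=3$ and $B=I_3$ give $\operatorname{tr}(BA)=1$, so $A\notin H$. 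You have no control over the rank of $B$, and $n=g-3\geq 3$ once $g\geq 6$, so this case cannot be discarded. As written, Step 3 does not prove the statement for $g\geq 6$.

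The repair is immediate. When $B$ is diagonal, $\operatorname{tr}(BA)=\sum_i b_i a_{ii}$ only sees the diagonal of $A$. Hence any nondegenerate symmetric matrix with zero diagonal lies in $H$, whatever $k$ is. For instance, take $A=J-I$ with $J$ the all-ones matrix. Its eigenvalues are $n-1$ and $-1$, so $\det A=(-1)^{n-1}(n-1)\neq 0$ for $n\geq 2$. This also removes your case split; your $k=1$ matrix is an instance of the same idea. Alternatively, keep $A$ diagonal and solve $b_1a_{11}+b_2a_{22}=-(b_3+\cdots+b_k)$ with $a_{11}a_{22}\neq 0$. This is possible because, with $b_1,b_2\neq 0$, that line in the $(a_{11},a_{22})$-plane is neither coordinate axis, so it meets their union in at most two points.

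With this fix, your argument is correct and takes a genuinely different route from the paper. The paper observes that the degenerate locus $\{\det=0\}\subset\operatorname{Sym}^2(V/W)$ is an irreducible hypersurface of degree $g-3\geq 2$. Such a hypersurface cannot contain the hyperplane $\operatorname{Im}(r|_{I_2(C_0)})$, since a linear form dividing an irreducible polynomial forces degree $1$. That argument is short, but it rests on the irreducibility of the generic symmetric determinant, which the paper asserts without proof. Your construction is self-contained linear algebra, uses only that the image is a hyperplane, and shows explicitly that $n=g-3\geq 2$ is exactly what is needed.
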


\begin{proof}

\noindent Consider $r|_{I_{2}(C_{0})}:I_{2}(C_{0})\rightarrow \operatorname{Sym}^{2}(V/W)$, where$$\dim \operatorname{Im}(r|_{I_{2}(C_{0})})=\frac{(g-2)(g-3)}{2}-1$$Since $\operatorname{Sym}^{2}(V/W)$ has dimension $\frac{(g-2)(g-3)}{2}$, the image of the restricted map is a hyperplane in $\operatorname{Sym}^{2}(V/W)$. The locus of degenerate quadratic forms in $\operatorname{Sym}^{2}(V/W)$ is defined by vanishing determinant, forming an irreducible hypersurface of degree $g-3$. For $g\ge 5$, this degree is $\ge 2$. Since an irreducible hypersurface of degree $d>1$ cannot contain a hyperplane, the image $\operatorname{Im}(r|_{I_{2}(C_{0})})$ must intersect the open dense subset of non-degenerate quadratic forms and it contains at least one nondegenerate quadratic form.
\end{proof}

\noindent Before moving on to prove that the codifferential achieves maximal rank, we set up two lemmas.

\begin{lemma}\label{lem:det} Let $M = \begin{pmatrix}
A & B\\
-B^{T} & D
\end{pmatrix}$ be a skew symmetric matrix where $A$ is invertible, then 
\[
\operatorname{det}
\begin{pmatrix}
A & B\\
-B^{T} & D
\end{pmatrix}
=
\operatorname{det}(A)\,
\operatorname{det}(D+B^{T}A^{-1}B),
\]
    
\end{lemma}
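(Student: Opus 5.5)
This is the Schur complement formula, and I would prove it by block Gaussian elimination. Since $A$ is invertible, I will factor $M$ as a product of a block lower unitriangular matrix, a block diagonal matrix, and a block upper unitriangular matrix. Then I will take determinants.

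Write $M=\begin{pmatrix} A & B\\ C & D\end{pmatrix}$ with $C=-B^{T}$. The general identity I will use is
\[
\begin{pmatrix} A & B\\ C & D\end{pmatrix}
=
\begin{pmatrix} I & 0\\ CA^{-1} & I\end{pmatrix}
\begin{pmatrix} A & 0\\ 0 & D-CA^{-1}B\end{pmatrix}
\begin{pmatrix} I & A^{-1}B\\ 0 & I\end{pmatrix}.
\]
I will verify it by multiplying out the blocks. The top-left block is $A$ and the top-right is $A\cdot A^{-1}B=B$. The bottom-left is $CA^{-1}A=C$. The bottom-right is $CA^{-1}B+(D-CA^{-1}B)=D$.

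The two outer factors are block unitriangular, so each has determinant $1$. The middle factor is block diagonal, so its determinant is $\det(A)\det(D-CA^{-1}B)$. By multiplicativity of the determinant, $\det M=\det(A)\det(D-CA^{-1}B)$. Substituting $C=-B^{T}$ gives $D-CA^{-1}B=D+B^{T}A^{-1}B$, which is the stated formula.

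There is no real obstacle. The one thing to check is that the block sizes are compatible: $A$ is square and invertible, $D$ is square, and $B$ is rectangular. I will also note that skew-symmetry is not needed for the identity. It only fixes the sign convention $C=-B^{T}$; in the skew case, $A$ and $D$ are themselves skew and $B^{T}A^{-1}B$ is skew as well. That makes $D+B^{T}A^{-1}B$ skew-symmetric again, which is presumably what the later rank argument, with Pfaffians or determinants, will use.
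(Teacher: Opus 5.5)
Your proof is correct and is essentially the paper's argument. The paper left-multiplies $M$ by the unipotent matrix $\begin{pmatrix} I & 0\\ B^{T}A^{-1} & I\end{pmatrix}$, which is the inverse of your lower factor since $CA^{-1}=-B^{T}A^{-1}$, to reach the block upper-triangular form $\begin{pmatrix} A & B\\ 0 & D+B^{T}A^{-1}B\end{pmatrix}$; you instead write out the full block $LDU$ factorization, which is the same block Gaussian elimination (and you are right that skew-symmetry plays no role in the identity).
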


\begin{proof}
Multiplying on the left by
\[
\begin{pmatrix}
I&0\\
B^TA^{-1}&I
\end{pmatrix}
\]
gives
\[
\begin{pmatrix}
A&B\\
0&D+B^TA^{-1}B
\end{pmatrix}.
\]
Note that the multiplying matrix has determinant one, therefore taking
determinants proves the formula.
\end{proof}

\begin{lemma}
\label{lem:general-theta-null}
For $g\geq 5$, let $S_g^+$ be the moduli of even spin curves and let
\[
\Theta_{\mathrm{null}}
:=
\left\{
(C,L)\in S_g^+
:
\dim H^0(C,L)\geq 2
\right\}
\]
be the theta-null divisor in the moduli space of smooth even spin
curves. Then there exists a nonempty Zariski open subset $U\subseteq \Theta_{\mathrm{null}}$ such that for every $(C,L)\in U$, $\dim H^0(C,L)=2$, $|L|\text{ is base-point-free}$, and $C$ is non-hyperelliptic.
\end{lemma}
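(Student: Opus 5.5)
The three conditions are each open on $\Theta_{\mathrm{null}}$, so the plan is to show this and then to exhibit, on every component of $\Theta_{\mathrm{null}}$, a single point where all three hold; the intersection of the resulting nonempty open subsets is then the desired $U$.

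\textbf{Irreducibility and dimension.} First I would use that for $g\geq 3$ the theta-null divisor $\Theta_{\mathrm{null}}$ is an irreducible divisor in $S_g^+$, of dimension $3g-4$, by the classical results of Teixidor i Bigas and of Harris on theta characteristics. If one prefers to avoid irreducibility, it suffices to know that every component has dimension $3g-4$, since it is a divisor in the $(3g-3)$-dimensional space $S_g^+$; the argument below is then run componentwise.

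\textbf{Openness.} By semicontinuity of $h^0$, the locus where $\dim H^0(C,L)\geq 4$ is closed. Parity of $h^0$ is constant on the even component, so $\dim H^0(C,L)$ is even, and therefore $\{\dim H^0(C,L)=2\}$ is open in $\Theta_{\mathrm{null}}$. On this open set the pencils $|L|$ form a family, and the base-point locus is the image of a closed subset of the universal curve under a proper map. Hence base-point-freeness is an open condition. The hyperelliptic locus is closed in $M_g$, so its preimage is closed as well.

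\textbf{Nonemptiness.} The hyperelliptic spin curves with $h^0\geq 2$ have dimension at most $2g-1$, which is less than $3g-4$ once $g\geq 4$. So they cannot fill any component, and the general point of each component is non-hyperelliptic. Next, the locus with $h^0\geq 4$ has codimension at least $3$ in $S_g^+$, by Harris's bound on the codimension of the locus where $h^0\geq r+1$ for theta characteristics, so it cannot contain a component of $\Theta_{\mathrm{null}}$.

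Base-point-freeness is the main obstacle. Suppose $L=M(p)$ with $p$ a base point, so that $h^0(M)=2$ and $\deg M=g-2$. I would count the dimension of such data. One option is to bound the dimension of the locus of curves carrying a $g^1_{g-2}$ by Brill--Noether theory: $\rho=g-2(g-(g-2)+1)=g-6$, so this locus has dimension at most $2g-6+(g-2)<3g-4$ for $\rho<0$. Since that bound breaks down for large $g$, a more robust route is to exhibit one explicit example. For this I would take a general curve of genus $g$ with a vanishing thetanull, or equivalently a canonical curve lying on a rank-$3$ quadric whose ruling cuts out a base-point-free $g^1_{g-1}$. Such an example comes from a smooth curve on a quadric cone of the appropriate type, for instance a complete intersection of a quadric cone with other hypersurfaces, or a curve on a Hirzebruch surface $\mathbb{F}_n$ in a suitable class whose ruling gives a base-point-free pencil $L$ with $L^2=K$. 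Irreducibility then lets this single example show that the open set is nonempty.
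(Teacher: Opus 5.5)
Your overall architecture matches the paper's. You show the three conditions are open, use that $\Theta_{\mathrm{null}}$ is irreducible of dimension $3g-4$ (Teixidor), and dispose of the hyperelliptic part by the same count $2g-1<3g-4$. The gap is that the two nonemptiness inputs, which the paper imports from the literature, are not actually supplied by your argument.

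First, Harris's theorem bounds the codimension of $\{h^0\geq r+1\}$ from \emph{above}: every component has codimension at most $\binom{r+1}{2}$. It therefore cannot show that $\{h^0\geq 4\}$ fails to fill $\Theta_{\mathrm{null}}$. What you need is that the general point of $\Theta_{\mathrm{null}}$ has $h^0=2$. The paper takes this from Teixidor; it would also follow from a single explicit example plus irreducibility.

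Second, and more seriously, base-point-freeness is never established, and your Brill--Noether route fails for every $g\geq 5$, not only large $g$. The bad locus maps finitely to the locus of curves carrying a $g^1_{g-2}$. That locus has dimension $\min(3g-3,\,3g-3+\rho)$ with $\rho=g-6\geq -1$, so it is at least $3g-4$. At $g=5$ it is the trigonal locus, of dimension $11=3g-4$. So without exploiting $2M+2p\sim K$ the count rules nothing out, and your figure $2g-6+(g-2)$ does not reflect it.

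The explicit-example route does not close the gap either. Taking ``a general curve with a vanishing thetanull'' is circular, since its general behaviour is exactly what is at issue. The concrete constructions you suggest do not produce examples in the needed range:
\begin{itemize}
\item Canonical curves of genus $\geq 6$ are not complete intersections. A rank-$3$ quadric cut by two general quadrics in $\mathbb{P}^4$ does work, but only for $g=5$.
\item On $\mathbb{F}_n$, take a smooth irreducible $C\sim aE+bF$ with $\mathcal{O}_C(F)^{\otimes 2}\cong K_C$. Then $\deg K_C=2a$, and adjunction forces $b=\tfrac{an}{2}+\tfrac{2a}{a-1}$, so $a-1$ divides $4$.
\item This leaves only $g\leq 4$, or $g=6$ with $C$ a plane quintic on $\mathbb{F}_1$. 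In that case $L=\mathcal{O}_C(1)$ has $h^0=3$ and is odd, so it is not in $\Theta_{\mathrm{null}}$.
\end{itemize}
The paper closes this step by citing Farkas--Izadi: for general $(C,L)\in\Theta_{\mathrm{null}}$, $|L|$ defines a degree $g-1$ map to $\mathbb{P}^1$, hence has no base points since $\deg L=g-1$. You need that citation, a genuine construction valid for all $g\geq 5$, or a dimension count that actually uses $2M+2p\sim K$.
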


\begin{proof}
The theta null locus $\Theta_{\mathrm{null}}$ is an irreducible
divisor in $S_g^+$, and its general point $(C,L)$
satisfies $\dim H^0(C,L)=2$ \cite[Remark~2.1 and Theorem~2.4]{Teixidor1988} and \cite[\S1.6]{FarkasIzadi2024}. Furthermore, Farkas--Izadi states that, for a general point
$(C,L)\in\Theta_{\mathrm{null}}$, the pencil $|L|$ induces a
cover
\[
f:C\longrightarrow \mathbb P^1
\]
of degree $g-1$. Since $\deg L=g-1$, this implies that $|L|$
has no fixed divisor and hence is base-point-free. Thus there exists a nonempty Zariski-open subset $U_0\subseteq\Theta_{\mathrm{null}}$ on which $\dim H^0(C,L)=2$ and $|L|$ is base-point-free. Now let $q:S_g^+\longrightarrow\mathcal{M}_g$ map $(C,L)$ to $C$, and $\mathcal H_g\subseteq\mathcal M_g$ be the hyperelliptic locus. The morphism $q$ is finite, therefore
\[
\dim\left(\Theta_{\mathrm{null}}
\cap q^{-1}(\mathcal H_g)\right)
\leq \dim\mathcal H_g=2g-1.
\]
On the other hand $\dim\Theta_{\mathrm{null}}=3g-4$. Then for $g \geq 5$ we have $2g-1<3g-4$, and $U_{1}
:=
\Theta_{\mathrm{null}}\setminus q^{-1}(\mathcal H_g)$ is a nonempty Zariski-open subset. Since
$\Theta_{\mathrm{null}}$ is irreducible,
\[
U:=U_0\cap U_{1}
\]
is nonempty and satisfies all the required properties.
\end{proof}

\noindent Before moving on the final theorem we explicitly calculate the codifferential of the superperiod map at $C_0$.

\begin{lemma}
After choosing the basis $\alpha,\beta$ so that $i^{-1}(S'_0)=\alpha\wedge\beta$, one has
\[
s'_{1,0}(Q_{\mathrm{null}})
=
\alpha^3\wedge\beta^3
-
3\alpha^2\beta\wedge\alpha\beta^2
\]

\end{lemma}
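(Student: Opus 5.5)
The plan is to apply \cref{thmfactor} directly and then expand bilinearly.

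First, fix the basis. By the Corollary to \cref{lem:injective}, $i\colon H^0(\mathcal O(1,1,0)^+)\to H^0(\mathcal O(1,1,1)^+)$ is an isomorphism of one-dimensional spaces. Moreover $S'_0\neq 0$ by \cref{prop:Szego-degeneration}. Hence $\xi=i^{-1}(S'_0)$ is a nonzero multiple of $\alpha\wedge\beta$ for any basis $\alpha,\beta$ of $H^0(L_0)$. Rescaling $\alpha$ lets us assume $\xi=\alpha\wedge\beta$. This rescaling does not change $W$, and it changes $Q_{null}=\alpha^2\cdot\beta^2-(\alpha\beta)\cdot(\alpha\beta)$ only by an overall scalar. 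The statement is understood with respect to this normalized basis.

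Next, $Q_{null}\in I_2(C_0)$ by the Corollary identifying $\ker(r|_{I_2(C_0)})$. So \cref{thmfactor} gives $s'_{1,0}(Q_{null})=m(Q_{null},\alpha\wedge\beta)$, and it remains to evaluate $m$ on the two summands of $Q_{null}$.

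\begin{itemize}
\item For $a=\alpha^2$, $b=\beta^2$, the identification used in the proof of \cref{thmfactor} gives
\[
m(\alpha^2\cdot\beta^2,\alpha\wedge\beta)=a\alpha\wedge b\beta-a\beta\wedge b\alpha=\alpha^3\wedge\beta^3-\alpha^2\beta\wedge\alpha\beta^2 .
\]
\item For $a=b=\alpha\beta$, the same rule gives
\[
m\bigl((\alpha\beta)\cdot(\alpha\beta),\alpha\wedge\beta\bigr)=\alpha^2\beta\wedge\alpha\beta^2-\alpha\beta^2\wedge\alpha^2\beta=2\,\alpha^2\beta\wedge\alpha\beta^2 .
\]
\end{itemize}

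Subtracting the second from the first yields $\alpha^3\wedge\beta^3-3\,\alpha^2\beta\wedge\alpha\beta^2$, as claimed.

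The main point requiring care is the normalization convention in the diagonal case $a=b$. The element $a\cdot a\in\operatorname{Sym}^2H^0(K)$ corresponds to $a\boxtimes a+a\boxtimes a=2\,a\boxtimes a$ in $H^0(\mathcal O(2,2,0)^+)$. We must use the convention consistently for both terms, via the four-term formula \eqref{mapm} that the paper identifies with $au\wedge bv-av\wedge bu$, since that convention is what produces the factor $2$ and hence the coefficient $3$.

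As a side remark, the result is nonzero. By \cref{lem:injectivesym} with $m=3$, the elements $\alpha^3,\alpha^2\beta,\alpha\beta^2,\beta^3$ are linearly independent in $H^0(K^{3/2})$. Hence $\alpha^3\wedge\beta^3$ and $\alpha^2\beta\wedge\alpha\beta^2$ are linearly independent in $\wedge^2H^0(K^{3/2})$, so the expression does not vanish. This will be useful later for the rank computation.
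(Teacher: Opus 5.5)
Your proposal is correct and is essentially the paper's own argument: you invoke \cref{thmfactor} to write $s'_{1,0}(Q_{\mathrm{null}})=m(Q_{\mathrm{null}},\alpha\wedge\beta)$ and expand $m$ on the two summands, and the diagonal term $(\alpha\beta)\cdot(\alpha\beta)$ contributes $2\,\alpha^2\beta\wedge\alpha\beta^2$, which produces the coefficient $3$. Your extra justifications — the normalization $i^{-1}(S'_0)=\alpha\wedge\beta$ via the one-dimensionality of $H^0(\mathcal O(1,1,0)^+)$ and $S'_0\neq 0$, and the membership $Q_{\mathrm{null}}\in I_2(C_0)$ — only make explicit what the paper's one-line computation takes for granted.
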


\begin{proof}
\begin{align*}
s'_{1,0}(Q_{null}) &= m(Q_{null}, i^{-1}(S_0')) \\
         &= m(\alpha^2 \cdot \beta^2 - \alpha\beta \cdot \alpha\beta, \alpha \wedge \beta) \\
         &= \alpha^3 \wedge \beta^3 - 3\alpha^2\beta \wedge \alpha\beta^2
\end{align*}    
\end{proof}
\noindent Now we proceed to the final theorem

\begin{theorem}\label{theorem}
Let \(g\geq 5\), and let \((C_{0},L_0)\) be a generic point of the vanishing theta-null locus, where
\(C_{0}\) is a non-hyperelliptic curve of genus \(g\), \(L_0^{\otimes 2}\cong K_{C_{0}}\), \(H^{0}(L_0)=2\), and \(|L_0|\) is base-point-free. Then there exists a quadric
\[
Q_{0}\in I_{2}(C_{0})
\]
such that the rescaled limiting codifferential
\[
s'_{1,0}(Q_{0})\in \bigwedge^{2}H^{0}(K_{C_{0}}^{3/2})
\]
has maximal rank \(2g-2\). Consequently, for a generic deformation \((C_{t},L_t,Q_{t})\) where $C_t$ is non-hyperelliptic specializing to \((C_{0},L_0,Q_{0})\), with
\[
Q_{t}\in I_{2}(C_{t}),
\]
the ordinary codifferential
\[
s_{1,t}(Q_{t})
\]
has maximal rank \(2g-2\) for all sufficiently small \(t\neq 0\).

\end{theorem}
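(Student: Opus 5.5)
The plan is to take $Q_0=Q_{\mathrm{null}}+\epsilon\tilde Q$, with $\tilde Q\in I_2(C_0)$ chosen so that $r(\tilde Q)\in\operatorname{Sym}^2(V/W)$ is nondegenerate; such a $\tilde Q$ exists by \cref{lem:nondeg}. We then show that $s'_{1,0}(Q_0)=m(Q_0,\alpha\wedge\beta)$ (\cref{thmfactor}) is a nondegenerate skew form on $H^0(K_{C_0}^{3/2})^\vee$ for small $\epsilon\neq0$, which will use the block determinant formula of \cref{lem:det}.

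First we choose a basis of $H^0(K_{C_0}^{3/2})$, which has dimension $2g-2$, adapted to the subspace $U:=\operatorname{Im}(\operatorname{Sym}^3H^0(L_0)\to H^0(K^{3/2}))$. This subspace is spanned by $\alpha^3,\alpha^2\beta,\alpha\beta^2,\beta^3$ and is $4$-dimensional by \cref{lem:injectivesym}. We complete it by $2g-6$ further sections. The natural choice is to use the surjectivity of $\mu_1:H^0(L)\otimes V\to H^0(K\otimes L)$ from \cref{lem:mu1surjective}, so that $H^0(K^{3/2})=\alpha V+\beta V$. Lifting a basis $v_1,\dots,v_{g-3}$ of $V/W$, the complement is spanned by $\alpha v_j,\beta v_j$, and a dimension count gives $4+2(g-3)=2g-2$. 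Linear independence should follow from $\ker\mu_1\cong H^0(L)$ being spanned by $\beta\otimes\alpha-\alpha\otimes\beta$-type relations, which live inside $H^0(L)\otimes W$. We write the skew matrix of $m(Q_0,\alpha\wedge\beta)$ in the dual basis, in block form with the $U$-block $A$ first.

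Next we compute the blocks. The term $m(Q_{\mathrm{null}},\alpha\wedge\beta)=\alpha^3\wedge\beta^3-3\alpha^2\beta\wedge\alpha\beta^2$ is supported entirely on $\wedge^2U$, and as a $4\times4$ skew matrix it is nondegenerate with Pfaffian $\pm3$. The term $\epsilon\, m(\tilde Q,\alpha\wedge\beta)$ contributes $O(\epsilon)$ to the blocks $A$ and $B$. On the complement, $D$ equals $\epsilon$ times the matrix of $\sum_{jk}q_{jk}(\alpha v_j\wedge\beta v_k-\beta v_j\wedge\alpha v_k)$ modulo $U$, where $(q_{jk})$ is the matrix of $r(\tilde Q)$. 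This is $\epsilon\,(q\otimes J)$ with $J$ the standard $2\times2$ symplectic form, so $\det D=\epsilon^{2g-6}\det(q)^2\neq0$. Here one must check that the terms of $\tilde Q$ lying in $K_W$ only produce products landing in $U\otimes(\cdot)$ or in correction terms of order $\epsilon$ inside the $B$-block; this is exactly why $r(\tilde Q)$ alone governs $D$ modulo the filtration. By \cref{lem:det}, $\det M=\det A\cdot\det(D+B^TA^{-1}B)$. Here $\det A=9+O(\epsilon)$, and $B^TA^{-1}B=O(\epsilon^2)$ while $D=\epsilon D_1$ with $D_1$ invertible. Hence $\det(D+B^TA^{-1}B)=\epsilon^{2g-6}(\det D_1+O(\epsilon))\neq0$ for small $\epsilon\neq0$, which gives rank $2g-2$.

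For the deformation statement, for $t\neq0$ we have $s'_{1,t}=t\,s_{1,t}$, and $s'_{1,t}(Q_t)$ depends holomorphically on $t$ with limit $s'_{1,0}(Q_0)$. We extend $Q_0$ to $Q_t\in I_2(C_t)$; this is possible since $I_2$ forms a vector bundle over the non-hyperelliptic locus, the dimension $(g-2)(g-3)/2$ being constant. Nondegeneracy is an open condition (a Pfaffian $\neq0$), so $s'_{1,t}(Q_t)$, and hence $s_{1,t}(Q_t)$, has rank $2g-2$ for small $t$.

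The main obstacle is the middle step: making the basis and filtration precise so that the $D$-block really is $\epsilon\,(r(\tilde Q)\otimes J)$ up to $O(\epsilon^2)$, and confirming that $\{\alpha^i\beta^{3-i}\}\cup\{\alpha v_j,\beta v_j\}$ is a basis. I would try first to verify this via the exact sequence $0\to L\to H^0(L)\otimes K\to K\otimes L\to0$ from the proof of \cref{lem:mu1surjective}.
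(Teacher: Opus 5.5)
Your proposal is correct and follows essentially the same route as the paper. It uses the same choice $Q_0=Q_{\mathrm{null}}+\epsilon\tilde Q$, the same adapted basis $\{\alpha^i\beta^{3-i}\}\cup\{\alpha v_j,\beta v_j\}$ coming from $F^1/F^0\cong (V/W)\otimes H^0(L_0)$, the same Schur-complement estimate via \cref{lem:det}, and the same extension of $Q_0$ to $Q_t$ through the bundle $I_2$ over the non-hyperelliptic locus; your remark that the $K_W$-part of $\tilde Q$ only produces terms in $F^0\wedge H^0(K_{C_0}^{3/2})$ (because $W\cdot H^0(L_0)=F^0$), and so cannot affect the $D$-block, is a step the paper uses but leaves implicit.
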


\begin{proof}

The subspace \(W\subset V\) induces a filtration
\[
F^{0}\subset F^{1}:=H^{0}(K_{C_0}^{3/2}),
\]
where $F^{0}:=\operatorname{Im}\left(\operatorname{Sym}^{3}H^{0}(L_0)
\longrightarrow H^{0}(K_{C_0}^{3/2})\right)$.
Let \(|L_0|\) be generated by \(\alpha,\beta\), then the map
\[
\operatorname{Sym}^{3}H^{0}(L_0)\longrightarrow H^{0}(L_0^{3})
=
H^{0}(K_{C_0}^{3/2})
\]
is injective by \cref{lem:injectivesym}. Therefore the four monomials $\alpha^{3}, \alpha^{2}\beta,\alpha\beta^{2}, \beta^{3}$ are linearly independent, and $\dim F^{0}=4$.\\

\noindent Consider $\mu_{1}:V\otimes H^{0}(L_0)\longrightarrow H^{0}(K_{C_0}\otimes L_0)
=
H^{0}(K_{C_0}^{3/2})$, which is surjective from \cref{lem:mu1surjective}. Moreover, the image of \(W\otimes H^{0}(L_0)\) under $\mu_1$ is $F^0$. Therefore \(\mu_{1}\) descends to a surjective map
\[
(V/W)\otimes H^{0}(L_0)
\twoheadrightarrow
H^{0}(K_{C_0}^{3/2})/F^{0}.
\]

\noindent Now $\dim((V/W)\otimes H^{0}(L_0)) = 2(g-3) = 2g-6$. On the other hand, $\dim H^{0}(K_{C_0}^{3/2})=2g-2$ by Riemann--Roch and $\dim(H^{0}(K_{C_0}^{3/2})/F^{0}) = (2g-2)-4 = 2g-6$.
Thus the induced map is an isomorphism, and $F^{1}/F^{0}
\cong (V/W)\otimes H^{0}(L_0)$.\\

\noindent Choose lifts \(a_{1},\dots,a_{g-3} \in V\) of a basis of \(V/W\), and then a basis of \(F^{1}\) by first taking $\alpha^{3}, \alpha^{2}\beta, \alpha\beta^{2},\beta^{3}$ for \(F^{0}\), followed by elements projecting to the quotient $a_{1}\alpha,\dots,a_{g-3}\alpha,
a_{1}\beta,\dots,a_{g-3}\beta$. By \cref{lem:nondeg}, we may choose \(\widetilde Q\in I_{2}(C_0)\) whose projection to
\(\operatorname{Sym}^{2}(V/W)\) is a non-degenerate quadratic form 

\begin{align*}
    Q_U = \sum_{j,k}c_{jk}a_{j}a_{k}
\end{align*}

\noindent Now consider the family of quadrics
\[
Q(\epsilon)=Q_{\mathrm{null}}+\epsilon \widetilde Q.
\]

\noindent With respect to the basis $\alpha^{3}, \alpha^{2}\beta, \alpha\beta^{2},\beta^{3}, a_{1}\alpha,\dots,a_{g-3}\alpha,
a_{1}\beta,\dots,a_{g-3}\beta$, the image of $Q_{\mathrm{null}}$ under the rescaled codifferential
\(s'_{1,0}(Q_{\mathrm{null}})\) is a $2g-2$ by $2g-2$ matrix given by 

\[s'_{1,0}(Q_{\mathrm{null}})
=\begin{pmatrix}
M_{null} & 0  \\
0 & 0 
\end{pmatrix}
\text{ where } M_{null}
=
\begin{pmatrix}
0 & 0 & 0 & 1 \\
0 & 0 & -3 & 0 \\
0 & 3 & 0 & 0\\
-1 & 0 & 0 & 0  \\
\end{pmatrix}
\]

\noindent  To compute \(s'_{1,0}(\epsilon \widetilde Q)\), by \cref{thmfactor} and \cref{mapm} consider

\[
m(a_{j}a_{k},\alpha\wedge\beta)
=
a_{j}\alpha\wedge a_{k}\beta
+
a_{k}\alpha\wedge a_{j}\beta.
\]
Therefore, with respect to the same basis, \(s'_{1,0}( \widetilde Q)\) is given by

\[
\begin{pmatrix}
A & B\\
-B^{T} & D
\end{pmatrix},
\text{ where } D
=
\begin{pmatrix}
0 & C\\
-C^T & 0
\end{pmatrix},
\]

\noindent where $A$ is an antisymmetric $4$ by $4$ matrix and $D$ is the $2g-6$ by $2g-6$ antisymmetric matrix representing \(Q_{U}\). Since \(Q_{U}\) is non-degenerate, then $\det(C)\neq0$, $\det(D)=\det(C)^{2}\neq0$ and \(D\) has full rank \(2g-6\).\\

\noindent Since $s'_{1,0}$ is linear, 

\[
s'_{1,0}(Q(\epsilon)) = \begin{pmatrix}
M_{null} & 0  \\
0 & 0 
\end{pmatrix}+\epsilon \begin{pmatrix}
A & B\\
-B^{T} & D
\end{pmatrix} = \begin{pmatrix}
M_{null}  + \epsilon A &\epsilon  B\\
-\epsilon B^{T} & \epsilon D
\end{pmatrix}
\]\\

\noindent Now $M_{null} $ has full rank, therefore for sufficiently small nonzero $\epsilon$ we have that $M_{null}  + \epsilon A$ is invertible. By \cref{lem:det} we have 

\begin{align*}
\operatorname{det}(s'_{1,0}(Q(\epsilon)) &= \operatorname{det}(M_{null}+\epsilon A)\operatorname{det}(\epsilon D+\epsilon B^T (M_{null}+\epsilon A)^{-1} \epsilon B) \\
&= \epsilon^{2g-6}\operatorname{det}(M_{null}+\epsilon A)\operatorname{det}( D+\epsilon B^T (M_{null}+\epsilon A)^{-1}  B)
\end{align*}

\noindent Expanding the above we obtain
\[
\operatorname{det}(s'_{1,0}(Q(\epsilon)))
=
\epsilon^{2g-6}
\operatorname{det}(M_{\mathrm{null}})
\operatorname{det}(D)
+
\mathcal O(\epsilon^{2g-5}).
\]

\noindent Since $M_{\mathrm{null}}$ and $D$ are non-degenerate, we have $\operatorname{det}(M_{\mathrm{null}})\neq0$ and  $\operatorname{det}(D)\neq0$. Therefore the leading coefficient is nonzero, and $\operatorname{det}(s'_{1,0}(Q(\epsilon)))$ is nonzero for all sufficiently small nonzero $\epsilon$.\\

\noindent Finally, over the non-hyperelliptic locus the multiplication map
\[
\operatorname{Sym}^2 H^0(K) \to H^0(K^2)
\] is surjective, and the kernel forms a locally free bundle. Fix $Q_0 = Q_{null} + \epsilon \tilde{Q}$ for sufficiently small nonzero $\epsilon$ such that $\operatorname{det}(s'_{1,0}(Q_0))$ is nonzero, and a one-parameter family of spin curves $(C_t, L_t)$ transverse to the theta null divisor, specializing to $(C_0,L_0)$. Since the non-hyperelliptic locus is open and $C_0$ is
non-hyperelliptic, we may
assume that $C_t$ is non-hyperelliptic for sufficiently
small $t$. Now, as the kernel forms a locally free bundle, $Q_0$ extends to $Q_t \in I_2 (C_t)$. Now $s'_{1,t}(Q_t) \to s'_{1,0} (Q_0)$ holomorphically and $\operatorname{det}(s'_{1,0}(Q_0)) \neq 0$, therefore $\operatorname{det}(s'_{1,t}(Q_t)) \neq 0$ for sufficiently small $t$. Since multiplication by the non-zero coordinate \(t\) does not change rank, then
\[
s_{1,t}(Q_{t})
\]
has full rank \(2g-2\) for sufficiently small \(t\neq0\).

\end{proof}

\begin{theorem}
    Let d be the minimal number such that $\mathcal{I}_{Sch,g}^d \subset \mathcal{I}_{s-Sch,g}$. Then $d = g$ for all $g\geq 5$. 
\end{theorem}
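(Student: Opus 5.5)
The proof splits into an upper bound and a lower bound.

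\textbf{Upper bound $d\le g$.} I would argue as the introduction does. The superperiod map is a morphism of superspaces whose source $\mathcal U_g$ has $2g-2$ odd coordinates. Take $f\in\mathcal I_{Sch,g}$. Then $per^*f$ vanishes on the reduced space $\mathcal U_{g,\mathrm{red}}$, because the reduced map is the classical period map and $\mathcal I_{Sch,g}$ cuts out its image. Hence $per^*f\in\mathcal N$, the nilradical. Since $\mathcal I_{Sch,g}$ consists of even functions, $per^*f$ is even, so it lies in $\mathcal N^2$. It follows that $per^*(f_1\cdots f_g)\in\mathcal N^{2g}=0$, because $2g>2g-2$ exceeds the number of odd generators. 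Thus $\mathcal I_{Sch,g}^g\subset\mathcal I_{s\text{-}Sch,g}$.

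\textbf{Lower bound $d\ge g$.} It suffices to find $f\in\mathcal I_{Sch,g}$ with $per^*(f^{g-1})\neq0$. Then $f^{g-1}\in\mathcal I_{Sch,g}^{g-1}\setminus\mathcal I_{s\text{-}Sch,g}$.

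1. Take $(C_0,L_0)$ in the open set $U$ of \cref{lem:general-theta-null}, so that \cref{theorem} applies. This gives $Q_0$ and a deformation $(C_t,L_t,Q_t)$ such that $s_{1,t}(Q_t)\in\wedge^2H^0(K_{C_t}^{3/2})$ has rank $2g-2$ for small $t\neq0$.

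2. Fix such a point $(C_t,L_t)\in\mathcal U_g$ with $C_t$ non-hyperelliptic. By \cref{lemma31}, $Q_t\in I_2(C_t)=N^*_{\mathcal M_g,\mathcal A_g}$. Choose a local function $f\in\mathcal I_{Sch,g}$ near the Jacobian of $C_t$ whose differential is the conormal vector $Q_t$. This is possible because $\mathcal I_{Sch,g}/\mathcal I_{Sch,g}^2$ surjects onto the conormal space at a smooth point of the Torelli image; non-hyperellipticity makes the Torelli map an immersion there.

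3. By the definition of the second variation (Section 2.2), the class of $per^*f$ in $\mathcal N^2/\mathcal N^3\cong\wedge^2(\mathcal N/\mathcal N^2)=\wedge^2H^0(K^{3/2})$ equals the codifferential applied to $df$, namely $s_{1,t}(Q_t)$. Write $\omega=s_{1,t}(Q_t)$. Then $per^*f=\omega+(\text{terms in }\mathcal N^3)$, and evenness puts the higher terms in $\mathcal N^4$.

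4. Consequently $per^*(f^{g-1})\equiv\omega^{g-1}\pmod{\mathcal N^{2g-1}}$. Since $\mathcal N^{2g-1}=0$, we get $per^*(f^{g-1})=\omega^{g-1}$, an element of the top exterior power $\wedge^{2g-2}$. This is the Pfaffian of $\omega$ times a volume form, and it is nonzero exactly when $\omega$ has full rank $2g-2$.

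\textbf{Main obstacle.} The delicate point is step 2 combined with step 3. One must justify that the leading term of $per^*f$ depends only on $df$ restricted to the conormal directions. It must not be contaminated by the choice of splitting of the supermanifold, or by the fact that $\mathcal U_g$ is only locally split. I would handle this by working in a local splitting near the chosen point: the lowest-order nilpotent term of a pulled-back function vanishing on the reduced image is intrinsic, which is exactly the content of the well-definedness of $d^{(2)}\phi$.
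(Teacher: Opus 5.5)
Your proposal is correct and follows essentially the same argument as the paper. The upper bound comes from $per^*\mathcal I_{Sch,g}\subseteq\mathcal N^2$ (even functions vanishing on the reduced image) together with $\mathcal N^{2g}=0$; the lower bound comes from a local Schottky function $f$ with conormal class $Q_t$ given by the maximal-rank theorem, so that $per^*f=\tfrac12\sum A_{ij}\eta_i\eta_j+O(\eta^4)$ and $per^*(f^{g-1})$ is a nonzero multiple of $\mathrm{Pf}(A)\,\eta_1\cdots\eta_{2g-2}$, with the extra justifications you supply (lifting $Q_t$ via the Torelli immersion at non-hyperelliptic curves, and the splitting-independence of the leading nilpotent term) being sound points the paper leaves implicit.
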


\begin{proof}
\noindent Let $t \neq 0$ be such that $s_{1,t}(Q_t)$ has full rank and choose local odd coordinates
$ \eta_1, \dots, \eta_{2g-2}$. Let $f \in I_{\text{Sch}}$ be a local function whose conormal class is $Q_t$. Then
$$ \text{per}^* f = \frac{1}{2} \sum_{i,j} A_{ij}\eta_i\eta_j + O(\eta^4), $$
where $A$ is the skew matrix representing $s_{1,t}(Q_t)$. Consider $(\text{per}^* f)^{g-1}$, then any term which contains at least one factor of degree at least four has degree at least $4+2(g-2)=2g$. Since there are only $2g-2$ odd variables, every term of degree
at least $2g$ vanishes. Consequently, $ (\text{per}^* f)^{g-1} = c_g \text{Pf}(A) \eta_1 \cdots \eta_{2g-2} $
for some nonzero numerical constant $c_g$. Now since $A$ has full rank,
$ \text{Pf}(A) \neq 0, $
and therefore
$\text{per}^*(f^{g-1})  = \text{per}^*(f)^{g-1}\neq 0. $ 
Thus $f^{g-1}\notin (I_{s-\mathrm{Sch},g})_x$ at the corresponding period point $x$. Hence $I_{\mathrm{Sch},g}^{\,g-1}
\not\subseteq I_{s-\mathrm{Sch},g}$.\\

\noindent For the opposite inclusion, let $\mathcal{J}$ be the ideal generated by the $2g - 2$ odd variables. Since a classical Schottky function vanishes on the reduced period image,
$ \text{per}^* I_{\text{Sch}} \subseteq \mathcal{J}^2.$ Therefore
$ \text{per}^*(I_{\text{Sch}}^g) \subseteq \mathcal{J}^{2g} = 0, $ and
$ I_{\text{Sch}}^g \subseteq I_{s-\text{Sch}}. $

\end{proof}

\section{Acknowledgements}

\noindent The author would like to thank Professor Ron Donagi for proposing the problem and his valuable feedback.

\bibliographystyle{amsplain}  
\bibliography{references} 

@article{FelderKazhdanPolishchuk2019,
  author = {Giovanni Felder and David Kazhdan and Alexander Polishchuk},
  title = {Regularity of the Superstring Supermeasure and the Superperiod Map},
  journal = {arXiv preprint arXiv:1905.12805},
  year = {2019},
  url = {https://arxiv.org/abs/1905.12805}
}

@article{Witten2012superRiemann,
  author    = {Edward Witten},
  title     = {Notes on Super Riemann Surfaces and Their Moduli},
  journal   = {arXiv preprint arXiv:1209.2459},
  year      = {2012},
  url       = {https://arxiv.org/abs/1209.2459}
}

@article{DonagiWitten2013,
  author  = {Donagi, Ron and Witten, Edward},
  title   = {Super Atiyah Classes and Obstructions to Splitting of Supermoduli Space},
  journal = {Pure and Applied Mathematics Quarterly},
  year    = {2013},
  volume  = {9},
  pages   = {739--788},
  doi     = {10.4310/pamq.2013.v9.n4.a5}
}

@article{DHoker1989,
  author  = {D'Hoker, Eric and Phong, D. H.},
  title   = {Conformal scalar fields and chiral splitting on super Riemann surfaces},
  journal = {Communications in Mathematical Physics},
  year    = {1989},
  volume  = {125},
  pages   = {469--513},
  doi     = {10.1007/bf01218413}
}

@article{Teixidor1988,
  author    = {Teixidor i Bigas, Montserrat},
  title     = {The divisor of curves with a vanishing theta-null},
  journal   = {Compositio Mathematica},
  volume    = {66},
  number    = {1},
  pages     = {15--22},
  year      = {1988},
  mrnumber  = {937985},
  zbl       = {0663.14017},
  url       = {https://www.numdam.org/item/CM_1988__66_1_15_0/}
}

@misc{FarkasIzadi2024,
  author        = {Farkas, Gavril and Izadi, Elham},
  title         = {Szeg{\H{o}} kernels and Scorza quartics on the moduli space of spin curves},
  year          = {2024},
  eprint        = {2409.13303},
  archivePrefix = {arXiv},
  primaryClass  = {math.AG},
  doi           = {10.48550/arXiv.2409.13303}
}

@article{DonagiNoja-genus4,
  author = {Donagi, Ron and Noja, Simone},
  title  = {{Schottky} versus super {Schottky} in genus 4},
  note   = {Companion paper},
  year   = {2026},
}
\end{document}